\documentclass[11pt]{article}

\usepackage{latexsym, amssymb, amscd, amsthm, amsxtra, amsmath,amsthm }
\usepackage{graphics, graphicx, color}
\usepackage[authoryear]{natbib}
\usepackage{ifpdf}
\usepackage[format=hang,indention=-1cm,small]{caption}
\usepackage[caption=false]{subfig}
\usepackage{multirow}
\usepackage{kotex}
\usepackage{hyperref}
\usepackage{enumitem}
\usepackage{diagbox}
\usepackage{makecell}
\usepackage{float}
\usepackage{algorithm}
\usepackage{algpseudocode}

\newtheorem{thm}{Theorem}

\newtheorem{lem}[thm]{Lemma}

\theoremstyle{definition}

\newtheorem*{defn*}{Definition}
\theoremstyle{remark}

\newtheorem{Assumption}{Assumption}


\newcommand{\norm}[1]{\left\Vert#1\right\Vert}
\newcommand{\abs}[1]{\left\vert#1\right\vert}

\newcommand{\Real}{\mathbb R}
\newcommand{\eps}{\varepsilon}

\def\argmin{\mathop{\rm argmin}}
\def\argmax{\mathop{\rm argmax}}

\def\diag{\mbox{diag}}

\def\diag{\mbox{Diag}}

\def\1v{\mathbf 1}
\def\0v{\mathbf 0}

\title{Adaptive Reference-Guided Estimation of Principal Component Subspace in High Dimensions}
\author{
  Dongsun Yoon\thanks{Department of Statistics, Seoul National University. Email: tooth0pasty@snu.ac.kr}
  \and
  Sungkyu Jung\thanks{Department of Statistics, Seoul National University. Email: sungkyu@snu.ac.kr}
}
\date{\today}
\begin{document}
\maketitle	

\begin{abstract}
We propose a novel estimator for the principal component (PC) subspace tailored to the high-dimension, low-sample size (HDLSS) context. The method, termed Adaptive Reference-Guided (ARG) estimator, is designed for data exhibiting spiked covariance structures and seeks to improve upon the conventional sample PC subspace by leveraging auxiliary information from reference vectors, presumed to carry prior knowledge about the true PC subspace. The estimator is constructed by first identifying vectors asymptotically orthogonal to the true PC subspace within a signal subspace, the subspace spanned by the leading sample PC directions and the references, and then taking the orthogonal complement. The estimator is adaptive, as it automatically selects the subspace asymptotically closest to the true PC subspace inside the signal subspace, without requiring parameter tuning. We show that when the reference vectors carry nontrivial information, the proposed estimator asymptotically reduces all principal angles between the estimated and true PC subspaces compared to the naive sample-based estimator. Interestingly, despite being derived from a completely different rationale, the ARG estimator is theoretically equivalent to an estimator based on James–Stein shrinkage. Our results thus establish a theoretical foundation that unifies these two distinct approaches.
\end{abstract}

\section{Introduction}\label{sec:intro}
High-dimension, low-sample size (HDLSS) data, introduced by \cite{Hall2005}, frequently appear in modern applications such as genetic microarrays, medical imaging, text recognition, and finance, where the number of variables $p$ far exceeds the sample size $n$. To address the challenges posed by high-dimensionality, a widely adopted strategy is to assume a spiked covariance structure \citep{Johnstone2001, Donoho2018, Ke2023}, where only a small number of eigenvalues (the ``spikes") of the true covariance matrix are significantly larger than the rest. This structure suggests that the primary signal lies in a low-dimensional true principal component (PC) subspace spanned by the corresponding eigenvectors. As a result, accurately estimating this true PC subspace is crucial, as it directly influences the performance of Principal Component Analysis (PCA) and the validity of subsequent analyses in high-dimensional settings.

Under the HDLSS asymptotic regime, where the dimension $p \to \infty$ while the sample size $n$ remains fixed, the magnitude of the spikes is naturally expressed as a function of $p$ \citep{Ahn2007, Jung2009, Jung2012, Yata2012, Shkolnik2022}. Specifically, the $i$th largest eigenvalue of the covariance matrix, denoted by $\lambda_i$, satisfies $\lambda_i = O(p^{\alpha_i})$ for $i=1,\dots,m$, where $m$ represents the number of spikes. We further assume that all spikes share the same exponent, that is, $\alpha_i = \alpha$ for $i=1,\dots,m$. The asymptotic behavior of PCA in this setting depends on the value of $\alpha$ \citep{Jung2009,Jung2012}.
We focus on the most intriguing boundary case, $\alpha = 1$, which aligns with the literature on approximate factor models \citep{Chamberlain1983, Bai2002, Fan2011}. In this setting, \cite{Jung2012} showed that the sample PC subspace is inconsistent. That is, the discrepancy between the sample and true PC subspaces, measured in terms of the principal angles, converges in probability to a random variable in $(0, \pi/2)$. This inherent inconsistency naturally leads to a fundamental question in PC subspace estimation under HDLSS settings: is it possible to construct a subspace estimator that is asymptotically closer to the true PC subspace than the conventional sample PC subspace?

In this paper, we provide a solution to the above question under the assumption that some prior information is available. This prior information takes the form of a set of unit vectors, referred to as \textit{reference directions}, which are believed to be close to the true PC subspace, although their exact alignment is unknown. A representative example is the normalized vector of ones, $(1,\dots,1)^{\top}/\sqrt{p}$, often used in financial applications to reflect the common market factor, as in the Capital Asset Pricing Model \citep{Sharpe1964, Lintner1965}.

We propose a novel estimator for the PC subspace, termed the \textit{Adaptive Reference-Guided (ARG) estimator}, which effectively incorporates the information contained in the reference directions. The term ``adaptive" reflects the estimator's ability to automatically integrate additional information from the reference directions without requiring parameter tuning, while ``reference-guided" emphasizes its use of predefined reference directions to enhance estimation accuracy. To fully leverage this additional information, we utilize \textit{negatively ridged discriminant vectors}, which naturally arise in HDLSS discrimination problems \citep{Bartlett2020,Kobak2020,Chang2021,Kim2024}. In our context, the reference directions play the role of mean difference in the discriminant vectors. As precisely defined in Section~\ref{sec:negative_ridge}, these vectors with a carefully chosen ridge parameter can be obtained purely from data and, most importantly, are asymptotically orthogonal to the true PC subspace, a perhaps counterintuitive but essential property that forms the foundation of our method.

We define the ARG subspace estimator by the orthogonal complement of the subspace spanned by the negatively ridged discriminant vectors within the \textit{signal subspace}, spanned by both the sample PC directions and the reference directions. By construction, the ARG estimator becomes the subspace asymptotically closest to the true PC subspace within the signal subspace. We show that when the reference directions carry nontrivial information about the true PC subspace, our method asymptotically reduces all principal angles between the estimated and true PC subspaces compared to the naive estimator. These theoretical findings are validated by numerical studies, demonstrating significant accuracy improvements when reference directions contain substantial information about the true PC subspace.

There is a growing body of research on utilizing prior information to improve the performance of PC subspace estimators in the HDLSS setting \citep{Goldberg2020,Shkolnik2022,Goldberg2022,Gurdogan2022,Goldberg2023,Gurdogan2024}. \cite{Shkolnik2022} examined the simplest case of a single-spike model with a single reference direction and demonstrated that shrinking (or rotating) the first sample PC direction toward the reference direction leads to a \textit{James-Stein estimator}, which achieves better asymptotic accuracy than the first sample PC direction. Expanding on this idea, \cite{Shkolnik2025} extended the James-Stein estimator to the general multi-spike model with multiple reference directions. However, their work does not provide a clear theoretical justification for why the James-Stein estimator outperforms the sample PC subspace. Additionally, it lacks an intuitive explanation of how the shrinkage framework extends to the general multi-spike model with multiple reference directions, as shrinking an $m$-dimensional sample PC subspace toward an $r$-dimensional subspace spanned by the reference directions is nontrivial.  

Although the ARG estimator is derived from a fundamentally different perspective, rooted in ridged discriminant vectors, it is, perhaps surprisingly, exactly equivalent to the James-Stein estimator. This unexpected equivalence offers deeper theoretical insight into how incorporating prior information enhances PC subspace estimation. Notably, our work fills the gaps left by \cite{Shkolnik2025}, by providing the first theoretical guarantee for the ARG estimator (or, equivalently, for the James-Stein estimator), along with an intuitive explanation that applies to the general setting of multiple spikes and multiple reference directions.

\section{Asymptotics of principal components and reference directions}

Let $X_1,\dots,X_n \in \mathbb{R}^p$ be a sample drawn from an absolutely continuous distribution in $\Real^p$ with an unknown mean vector $\mu$ and a symmetric positive-definite covariance matrix $\Sigma$. We consider the high-dimensional setting where the dimension $p$ exceeds the sample size $n$. The eigendecomposition of $\Sigma$ is given by $\Sigma = U\Lambda U^{\top} = \sum_{i=1}^p \lambda_i u_i u_i^{\top}$, where $U = [u_1,\dots,u_p]$ is the $p \times p$ matrix of eigenvectors, and $\Lambda$ is the $p \times p$ diagonal matrix of ordered eigenvalues $\lambda_1 \geq \cdots\geq \lambda_p$. We assume a spiked covariance model, where a small number $m$ of eigenvalues grow proportionally to $p$, while the remaining eigenvalues remain bounded.

\begin{Assumption} \label{assum:spiked_covariance}
    For $1 \leq m \leq n-2$, $\sigma_i^2,\tau_i^2>0$, the eigenvalues of the covariance matrix $\Sigma$ satisfy:
    \[
    \lambda_i =
    \begin{cases} 
    \sigma_i^2 p + \tau_i^2, & \quad \text{for } i = 1, \dots, m, \quad \text{(spiked eigenvalues)} \\[8pt]
    \tau_i^2, & \quad \text{for } i = m+1, \dots, p. \quad \text{(non-spiked eigenvalues)}
    \end{cases}
    \]
    Furthermore, $\max_{1\leq i \leq p}\tau_i^2$ is uniformly bounded and $\sum_{i=1}^p \tau_i^2/p \rightarrow \tau^2$ as $p \rightarrow \infty$, for some constant $\tau^2>0$.
\end{Assumption}

Denote the $p \times n$ data matrix as $X = [X_1,\dots,X_n]$ and the matrix of standardized true PC scores of $X$ as $Z$, that is,
\begin{equation*}
    Z = \Lambda^{-1/2} U^{\top} (X - \mu 1_n^{\top}) = \begin{bmatrix} z_1^{\top} \\ \vdots \\ z_p^{\top} \end{bmatrix} = \begin{bmatrix} z_{11}, \dots, z_{1n} \\ \vdots \\ z_{p1}, \dots, z_{pn} \end{bmatrix},
\end{equation*}
where $z_i$ is the $n$-dimensional vector of standardized true PC scores associated with the $i$th PC. In the HDLSS asymptotic regime, where the dimension $p$ grows while the sample size $n$ remains fixed, a key aspect of the analysis is the application of the law of large numbers across variables ($p \rightarrow \infty$), rather than across the sample ($n \rightarrow \infty$). To ensure the validity of this approach, we impose the following $\rho$-mixing condition, which regulates the dependence among the elements of a $p$-dimensional vector $(z_{1i},\dots,z_{pi})$; see \cite{Kolmogorov1960,Bradley2005}. This assumption is significantly weaker than normality or independence, yet it provides sufficient conditions for applying the law of large numbers.

\begin{Assumption} \label{assum:rho_mixing}
    The elements of the $p$-dimensional vector $(z_{1i},\dots,z_{pi})$ have uniformly bounded fourth moments. For each $p$, $(z_{1i},\dots,z_{pi})$ is a truncation of an infinite sequence $(z_{(1)},z_{(2)},\dots)_{i}$, which satisfies the $\rho$-mixing condition under some permutation.
\end{Assumption}

Analyzing high-dimensional data using PCA relies on the sample covariance matrix $S = \sum_{i=1}^n (X_i - \bar{X})(X_i - \bar{X})^{\top}/n$, where $\bar{X}$ is the sample mean. Since $S$ has rank $n-1$ almost surely, its eigendecomposition is given by $S =\hat{U}_{n-1}\hat{\Lambda}_{n-1}\hat{U}_{n-1}^{\top} = \sum_{i=1}^{n-1} \hat{\lambda}_i\hat{u}_i\hat{u}_i^{\top}$, where $\hat{U}_{n-1} = [\hat{u}_1,\dots,\hat{u}_{n-1}]$ is the $p \times (n-1)$ matrix of eigenvectors (sample PC directions), and $\hat{\Lambda}_{n-1}$ is the $(n-1) \times (n-1)$ diagonal matrix of ordered eigenvalues $\hat{\lambda}_1 \geq \cdots \geq \hat{\lambda}_{n-1}$ (sample PC variances). We choose $\hat{u}_i$ so that $\hat{u}_i^{\top} u_i \geq 0$ for $i = 1, \dots, n - 1$.

The goal of this paper is to develop an asymptotically improved estimator for the PC subspace, and it is essential to first understand the limiting behavior of the sample PC variances $\hat{\lambda}_i$ and sample PC directions $\hat{u}_i$ as $p \rightarrow \infty$. The following lemma, which is a slight modification of a result in \cite{Jung2012}, serves as a foundation for our subsequent analysis. As the proof follows directly from the argument in \cite{Jung2012}, with only minor adjustments, we omit it here.

We use the following notation. Let $\phi_i(A)$ denote the $i$th largest eigenvalue of a real symmetric matrix $A$, and let $v_i(A)$ be the corresponding eigenvector. Denote the $j$th element of $v_i(A)$ by $v_{ij}(A)$. Define the $n \times m$ matrix of the leading $m$ component scores as $W = [\sigma_1z_1,\dots,\sigma_mz_m]$, and let $\Omega = W^{\top}(I_n-J_n)W$ where $J_n = 1_n1_n^{\top}/n$. The random matrix $\Omega$ plays a central role in representing the asymptotic behavior of the sample PC variances $\hat{\lambda}_i$ and sample PC directions $\hat{u}_i$.

\begin{lem}[Theorem 2 of \cite{Jung2012}] \label{lem:asymp_properties_PC}
    Suppose Assumptions \ref{assum:spiked_covariance}-\ref{assum:rho_mixing} hold. Then, as $p \rightarrow \infty$, the sample PC variances and directions exhibit the following asymptotic behavior:

    \begin{enumerate}[label=(\roman*)]  
    
        \item Limits of sample PC variances
        \[
        \frac{n \hat{\lambda}_i}{p} \xrightarrow{P}  
        \begin{cases}  
        \phi_i(\Omega) + \tau^2, & \quad 1 \leq i \leq m, \\  
        \tau^2, & \quad m+1 \leq i \leq n-1.  
        \end{cases}  
        \]  
    
        \item Limits of sample PC directions
        \[
        \hat{u}_i^{\top} u_j \xrightarrow{P}  
        \begin{cases}  
        \sqrt{\frac{\phi_i(\Omega)}{\phi_i(\Omega) + \tau^2}} v_{ij}(\Omega), & \quad 1 \leq i \leq m, \; 1 \leq j \leq m, \\  
        0, & \quad \text{otherwise}.  
        \end{cases}  
        \]  
    
    \end{enumerate}
    
\end{lem}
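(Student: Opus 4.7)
The plan is to exploit the duality between the $p\times p$ sample covariance $S$ and its $n\times n$ dual $nS_D = (X-\bar{X} 1_n^\top)^\top(X-\bar{X} 1_n^\top)$, which share all nonzero eigenvalues. Writing $X - \mu 1_n^\top = U\Lambda^{1/2}Z$, a direct calculation gives
\begin{equation*}
\frac{nS_D}{p} = (I_n - J_n)\!\left[WW^\top + \sum_{i=1}^p \tfrac{\tau_i^2}{p}\,z_iz_i^\top + o_P(1)\right]\!(I_n - J_n),
\end{equation*}
where the $o_P(1)$ absorbs the correction $\sum_{i=1}^m (\tau_i^2/p)\, z_iz_i^\top$, negligible because $m$ is fixed and the $\tau_i^2$ are bounded. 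The first summand is the random signal $WW^\top$, while the second is controlled using Assumption \ref{assum:rho_mixing}: since $E[z_{ij}z_{ik}] = \delta_{jk}$ and the sequence is $\rho$-mixing with uniformly bounded fourth moments, an LLN for weakly dependent arrays \citep{Bradley2005} applied entrywise yields $\sum_i (\tau_i^2/p)\, z_{ij}z_{ik} \xrightarrow{P} \tau^2\delta_{jk}$, and hence
\begin{equation*}
\frac{nS_D}{p} \xrightarrow{P} (I_n - J_n)WW^\top(I_n - J_n) + \tau^2 (I_n - J_n).
\end{equation*}

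Let $V$ be an $n\times(n-1)$ matrix with orthonormal columns satisfying $VV^\top = I_n - J_n$. The limiting matrix, restricted to the column space of $V$, equals $V^\top WW^\top V + \tau^2 I_{n-1}$, whose nonzero eigenvalues are $\phi_1(\Omega)+\tau^2,\dots,\phi_m(\Omega)+\tau^2$ (since $V^\top WW^\top V$ and $W^\top VV^\top W = \Omega$ share nonzero spectrum), while the remaining $n-1-m$ eigenvalues equal $\tau^2$. Weyl's inequality combined with the continuous mapping theorem for eigenvalues then delivers part (i). For part (ii), I would use the duality $\hat u_i = (X-\bar{X} 1_n^\top)\hat v_i/\sqrt{n\hat\lambda_i}$, where $\hat v_i$ is the $i$th unit eigenvector of $nS_D$, which gives
\begin{equation*}
\hat u_i^\top u_j = \frac{\sqrt{\lambda_j}\, z_j^\top(I_n-J_n)\hat v_i}{\sqrt{n\hat\lambda_i}}.
\end{equation*}
Taking an SVD $V^\top W = PDQ^\top$ identifies $\Omega = QD^2Q^\top$, so $v_i(\Omega)$ is the $i$th column of $Q$ and the top-$m$ eigenvectors of the limiting matrix are the columns of $VP$. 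For $i,j\le m$, substituting the limit of $\hat v_i$ (the $i$th column of $VP$), $\sqrt{\lambda_j/p}\to\sigma_j$, and $\sqrt{n\hat\lambda_i/p}\to\sqrt{\phi_i(\Omega)+\tau^2}$, then simplifying via $\sigma_j z_j^\top V = e_j^\top QDP^\top$ (which is row $j$ of $V^\top W$), produces the stated limit $\sqrt{\phi_i(\Omega)/(\phi_i(\Omega)+\tau^2)}\,v_{ij}(\Omega)$. In the ``otherwise'' cases either $\sqrt{\lambda_j}=O(1)$ while $\sqrt{n\hat\lambda_i}$ grows like $\sqrt{p}$, or $\hat v_i$ concentrates in the orthogonal complement of the range of $VP$, and the same SVD identity forces $z_j^\top(I_n-J_n)\hat v_i \to 0$.

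The main technical obstacle is that the limiting dual matrix has the eigenvalue $\tau^2$ with multiplicity $n-1-m$, so each individual $\hat v_i$ for $i>m$ does not converge to a well-defined vector; only the corresponding $(n-1-m)$-dimensional eigenspace is identified in the limit. The remedy is that the target quantities $\hat u_i^\top u_j$ are invariant under orthogonal rotations within this eigenspace, and the SVD representation above shows they vanish for any basis choice. All convergence claims then follow by combining continuous mapping of the spiked part of the spectrum with Davis--Kahan-type control of the top-$m$ eigenvectors using the spectral gap supplied by Assumption \ref{assum:spiked_covariance}.
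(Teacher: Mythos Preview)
Your proposal is correct and follows essentially the same dual-covariance-matrix approach as \cite{Jung2012}, which the paper cites without reproducing the argument. Two minor points worth tightening: convergence of the individual $\hat v_i$ for $i\le m$ requires the limiting eigenvalues $\phi_1(\Omega),\dots,\phi_m(\Omega)$ to be pairwise distinct (not just separated as a block from $\tau^2$), which holds almost surely under the absolute-continuity assumption; and the paper's sign convention $\hat u_i^\top u_i\ge 0$ is what fixes the otherwise ambiguous sign in the limit $v_{ij}(\Omega)$.
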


    Lemma \ref{lem:asymp_properties_PC}(ii) implies that sample PC directions $\hat{u}_i$ are inconsistent, leaving room for improvement in the PC subspace estimation. To address this issue, one can incorporate prior information---external knowledge that provides guidance on the true PC subspace $\mathcal{U}_m$. In this context, prior information is represented by a set of unit vectors $\{v_1,\dots,v_r\}$, referred to as \textit{reference directions}, which are believed to be close to $\mathcal{U}_m$, though their exact alignment is unknown. Reference directions naturally arise in various applications where domain expertise suggests an approximate structure for the data. For instance, in financial applications, the normalized vector of ones, $(1,\dots,1)^{\top}/\sqrt{p}$, can be used under the assumption that all assets follow a common market factor, as suggested by various asset pricing models such as the Capital Asset Pricing Model \citep{Sharpe1964,Lintner1965} and the Fama-French three- and five-factor models \citep{Fama1993,Fama2015}. Similarly, in genetics, reference directions may be derived from known biological pathways \citep{Kanehisa2000}, where groups of functionally related genes exhibit coordinated expression patterns, providing prior information about gene expression data. To formally incorporate reference directions into the PC subspace estimation framework, we impose the following conditions on their structure and alignment with $\mathcal{U}_m$. Denote the $p \times r$ matrix of reference directions as $V_r = [v_1,\dots,v_r]$ and the subspace spanned by reference directions as $\mathcal{V}_r = \operatorname{span}(v_1,\dots,v_r)$.

\begin{Assumption} \label{assum:reference_directions_non_singular}
    For $1 \leq r \leq p-m$, the reference directions $v_1,\dots,v_r$ are linearly independent $p$-dimensional unit vectors. The $r \times r$ Gram matrix $V_r^{\top}V_r$ converges entrywise to a non-singular matrix $V^{G}$ as $p \rightarrow \infty$.
\end{Assumption}
Assumption \ref{assum:reference_directions_non_singular} ensures that the reference directions $v_1,\dots,v_r$ span a well-conditioned $r$-dimensional subspace $\mathcal{V}_r$, preventing numerical instabilities in subsequent analysis. Additionally, we quantify the relationship between the reference directions and the true PC subspace $\mathcal{U}_m$ through their projections and inner products. Let $P_{\mathcal{T}}v$ denote the orthogonal projection of a vector $v$ onto a subspace $\mathcal{T}$.

\begin{Assumption} \label{assum:reference_directions_ratios}
    There exist constants $a_i \in [0,1]$ such that $\norm{P_{\mathcal{U}_m}v_i} \rightarrow a_i$ as $p \rightarrow \infty$ for $i=1,\dots,r$. Furthermore, there exist constants $a_{ij} \in [-1,1]$ such that $v_i^{\top}u_j \rightarrow a_{ij}$ as $p \rightarrow \infty$, for $i=1,\dots,r$ and $j=1,\dots,m$.
\end{Assumption}
Assumption \ref{assum:reference_directions_ratios} quantifies the informativeness of reference directions by measuring their alignment with the true PC subspace $\mathcal{U}_m$. The quantity $a_i$ represents the asymptotic norm of the projection of $v_i$ onto $\mathcal{U}_m$, indicating how much information $v_i$ retains about $\mathcal{U}_m$. If $a_i$ is close to 1, then $v_i$ is highly informative, whereas if $a_i$ is close to 0, it provides little information. Similarly, the constants $a_{ij}$ capture the limiting inner products between the reference directions and the true PC directions, which provides a finer characterization of their directional alignment.

If the reference directions $v_1,\dots,v_r$ are believed to contain prior information about the true PC subspace $\mathcal{U}_m$, they can be used to build an improved estimator of $\mathcal{U}_m$. To effectively incorporate this information, it is essential to understand how the reference directions interact with the sample PC directions asymptotically. The following lemma, which is a slight modification of Lemma C.3 in \cite{Chang2021}, characterizes the limiting behavior of the inner products between the sample PC directions and the reference directions. This result provides a key asymptotic relationship for developing a refined estimator of $\mathcal{U}_m$. As the proof follows directly from the argument in \cite{Chang2021}, with only minor adjustments, we omit it here.

\begin{lem} [Theorem C.3 of \cite{Chang2021}] \label{lem:asymp_properties_reference_directions}
    Suppose Assumptions \ref{assum:spiked_covariance}, \ref{assum:rho_mixing}, and \ref{assum:reference_directions_ratios} hold. Then, as $p \rightarrow \infty$, the reference directions exhibit the following asymptotic behavior:
    \[
    \hat{u}_i^{\top} v_j \xrightarrow{P}  
    \begin{cases}  
        \sqrt{\frac{\phi_i(\Omega)}{\phi_i(\Omega) + \tau^2}} \sum_{k=1}^m a_{jk} v_{ik}(\Omega), & \quad 1 \leq i \leq m, \; 1 \leq j \leq r, \\  
        0, & \quad m+1 \leq i \leq n-1, \; 1 \leq j \leq r.  
    \end{cases}  
    \]

\end{lem}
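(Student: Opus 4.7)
The plan is to use the low-dimensional dual formulation of PCA, which is the standard tool in this HDLSS framework. Write the centered data matrix as $\tilde{X} = U\Lambda^{1/2}Z(I_n - J_n)$ and let $\hat{e}_i$ be the $i$th eigenvector of the dual matrix $S_D = \tilde{X}^\top\tilde{X}/n$. Then $\hat{u}_i = \tilde{X}\hat{e}_i/\sqrt{n\hat{\lambda}_i}$, so
$$\hat{u}_i^\top v_j = \frac{\hat{e}_i^\top\bigl(\tilde{X}^\top v_j/\sqrt{p}\bigr)}{\sqrt{n\hat{\lambda}_i/p}}.$$
The denominator is controlled by Lemma~\ref{lem:asymp_properties_PC}(i), so the task reduces to an inner product of two fixed-$n$-dimensional random vectors.

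First I would establish the limit of $\tilde{X}^\top v_j/\sqrt{p}$. Writing $\beta_{jk} = v_j^\top u_k$ and splitting by spiked indices,
$$\frac{\tilde{X}^\top v_j}{\sqrt{p}} = (I_n - J_n)\Biggl[\sum_{k=1}^m \sqrt{\tfrac{\lambda_k}{p}}\,\beta_{jk}\, z_k + \sum_{k=m+1}^p \tfrac{\tau_k}{\sqrt{p}}\beta_{jk}\, z_k\Biggr].$$
For the spiked part, $\sqrt{\lambda_k/p}\to\sigma_k$ (Assumption~\ref{assum:spiked_covariance}) and $\beta_{jk}\to a_{jk}$ (Assumption~\ref{assum:reference_directions_ratios}), giving the probability limit $(I_n - J_n)\sum_{k=1}^m \sigma_k a_{jk}z_k = (I_n - J_n)W a^{(m)}_j$, where $a^{(m)}_j = (a_{j1},\dots,a_{jm})^\top$. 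For the tail, a second-moment computation using $E[z_k^\top z_l] = n\delta_{kl}$ yields $E[\|\sum_{k>m}(\tau_k\beta_{jk}/\sqrt{p})z_k\|^2] \le (n\max_k\tau_k^2/p)\sum_{k>m}\beta_{jk}^2 = O(1/p)$, since $\sum_k\beta_{jk}^2 = 1$ and $\max_k\tau_k^2$ is uniformly bounded (Assumption~\ref{assum:spiked_covariance}). Hence $\tilde{X}^\top v_j/\sqrt{p}\xrightarrow{P}(I_n - J_n)Wa^{(m)}_j$.

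Next I would invoke the limit of $\hat{e}_i$ implicit in the proof of Lemma~\ref{lem:asymp_properties_PC}. Under Assumption~\ref{assum:rho_mixing}, $(n/p)S_D \xrightarrow{P} (I_n - J_n)(WW^\top + \tau^2 I_n)(I_n - J_n)$ (the $\tau^2 I_n$ component comes from an LLN on the $p-m$ bulk terms); its top-$m$ eigenspace equals the column space of $(I_n - J_n)W$, so $\hat{e}_i\xrightarrow{P}(I_n - J_n)Wv_i(\Omega)/\sqrt{\phi_i(\Omega)}$ for $i\le m$, while for $i>m$ the eigenvector $\hat{e}_i$ converges into the orthogonal complement of this column space. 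Combining with the previous step and using $W^\top(I_n - J_n)W = \Omega$ together with $\Omega v_i(\Omega) = \phi_i(\Omega)v_i(\Omega)$ gives, for $i\le m$,
$$\hat{u}_i^\top v_j \xrightarrow{P} \frac{v_i(\Omega)^\top\Omega a^{(m)}_j}{\sqrt{\phi_i(\Omega)(\phi_i(\Omega)+\tau^2)}} = \sqrt{\tfrac{\phi_i(\Omega)}{\phi_i(\Omega)+\tau^2}}\sum_{k=1}^m a_{jk}v_{ik}(\Omega),$$
and for $i>m$ the limit vanishes because the limit of $\hat{e}_i$ is orthogonal to the range of $(I_n - J_n)W$.

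The main difficulty is that a naive direct expansion $\hat{u}_i^\top v_j = \sum_k(\hat{u}_i^\top u_k)(v_j^\top u_k)$ cannot be controlled termwise over the $p-m$ tail indices, and Cauchy-Schwarz is too crude because both $\sum_{k>m}(v_j^\top u_k)^2 \to 1 - a_j^2$ and $\sum_{k>m}(\hat{u}_i^\top u_k)^2 \to \tau^2/(\phi_i(\Omega)+\tau^2)$ are generically nonzero. The dual formulation bypasses this by collapsing the entire noise contribution into a single fixed-dimensional vector with $O(1/p)$ squared norm; the supporting eigenvector convergence for $\hat{e}_i$ is what requires the $\rho$-mixing structure of Assumption~\ref{assum:rho_mixing}.
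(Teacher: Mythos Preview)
Your proposal is correct and follows the standard dual-formulation route that underlies both \cite{Jung2012} and \cite{Chang2021}; the paper itself omits the proof and simply cites \cite{Chang2021}, whose argument is exactly the one you reconstruct. The only point worth tightening is the case $m+1\le i\le n-1$: since the limiting eigenvalue $\tau^2$ has multiplicity $n-m-1$, individual $\hat{e}_i$ need not converge, but your conclusion still holds because the projection of $\hat{e}_i$ onto $\operatorname{range}((I_n-J_n)W)$ vanishes, which is all that is needed for $\hat{e}_i^\top(I_n-J_n)Wa^{(m)}_j\to 0$.
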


\section{The Adaptive Reference-Guided estimator}

\subsection{Negatively ridged discriminant vectors} \label{sec:negative_ridge}

To improve upon the naive sample PC subspace $\hat{\mathcal{U}}_m = \operatorname{span}(\hat{u}_1,\dots,\hat{u}_m)$ in estimating the true PC subspace $\mathcal{U}_m$ using the prior information from the reference directions, it is natural to consider the following candidate subspace:
\begin{equation*}
    \mathcal{S}:=\operatorname{span}(\hat{u}_1,\dots,\hat{u}_m,v_1,\dots,v_r),
\end{equation*}
with the goal of identifying a subspace within $\mathcal{S}$ that is asymptotically closest to $\mathcal{U}_m$. We refer to $\mathcal{S}$ as the \textit{signal subspace}, since it consists of both the signal part of the sample PC directions and the reference directions. The remaining directions, $\hat{u}_{m+1},\dots,\hat{u}_{n-1}$, represent noise components that do not contain useful information about $\mathcal{U}_m$, as shown in Lemma \ref{lem:asymp_properties_PC}(ii). Thus, we discard them. The signal subspace $\mathcal{S}$ is $(m+r)$-dimensional almost surely, since $\hat{\mathcal{U}}_m \cap \mathcal{V}_r=\{0_p\}$ holds almost surely for data sampled from absolutely continuous distribution with $m+r \leq p$.

The subspace closest to $\mathcal{U}_m$ inside $\mathcal{S}$ is the orthogonal projection $P_{\mathcal{S}}\mathcal{U}_m = \operatorname{span}(P_{\mathcal{S}}u_1,\dots,P_{\mathcal{S}}u_m)$. However, since $P_{\mathcal{S}}\mathcal{U}_m$ depends on the unknown true PC directions $u_1,\dots,u_m$, it cannot be directly used. 
Instead, we aim to construct an estimator that consistently approximates $P_{\mathcal{S}}\mathcal{U}_m$. As a first step, we define the \textit{negatively ridged discriminant vectors} $d_1,\dots,d_r$ as follows:
\begin{equation}
    d_i := -\tilde{\lambda}(S_m-\tilde{\lambda}I_p)^{-1}v_i,
    \label{eq:negative_ridge}
\end{equation}
where $\tilde{\lambda} = \sum_{i=m+1}^{n-1}\hat{\lambda}_i/(n-m-1)$ is the average of non-spiked sample eigenvalues and $S_m = \sum_{i=1}^m \hat{\lambda}_i\hat{u}_i\hat{u}_i^{\top}$ is the spiked component of the sample covariance matrix $S$. We refer to these vectors as the negatively ridged discriminant vectors, since they take the form of ridged discriminant vectors with covariance $S_m$, mean differences $v_i$, and ridge parameter $-\tilde{\lambda}$. The inverse $(S_m-\tilde{\lambda}I_p)^{-1}$ is well defined almost surely, since $S_m-\tilde{\lambda}I_p$ is non-singular if and only if $\hat{\Lambda}_m^{-1} - I_m/\tilde{\lambda} = \diag(\hat{\lambda}_i^{-1} - \tilde{\lambda}^{-1})$ is non-singular. The only case where $\diag(\hat{\lambda}_i^{-1} - \tilde{\lambda}^{-1})$ becomes singular is when $\hat{\lambda}_m = \tilde{\lambda}$, which is equivalent to $\hat{\lambda}_m = \cdots = \hat{\lambda}_{n-1}$, an event that occurs with probability zero under absolutely continuous distributions. Applying the Sherman-Morrison-Woodbury formula, we obtain an alternative expression for $d_i$:
\begin{equation}\label{eq:negative_ridge_equivalence}
    d_i = -\sum_{j=1}^m \frac{\tilde{\lambda}\hat{u}_j^{\top}v_i}{\hat{\lambda}_j-\tilde{\lambda}}\hat{u}_j + \tilde{v}_i,
\end{equation}
where $\tilde{v}_i = (I_p - \hat{U}_m\hat{U}_m^{\top})v_i$ denotes the component of $v_i$ orthogonal to the sample PC subspace $\hat{\mathcal{U}}_m$.

Collecting the vectors introduced in (\ref{eq:negative_ridge_equivalence}), write $D_r = [d_1,\dots,d_r]$ and $\tilde{V}_r = [\tilde{v}_1,\dots,\tilde{v}_r]$. Denote the subspaces spanned by these vectors as $\mathcal{D}_r = \operatorname{span}(d_1,\dots,d_r)$ and $\tilde{\mathcal{V}}_r = \operatorname{span}(\tilde{v}_1,\dots,\tilde{v}_r)$, respectively. 
It follows that both $\mathcal{D}_r$ and $\tilde{\mathcal{V}}_r$ are $r$-dimensional subspaces contained within the signal subspace $\mathcal{S}$. For $\mathcal{D}_r$, we have $\dim(\mathcal{D}_r)=r$ due to the non-singularity of $S_m - \tilde{\lambda}I_p$ and the linear independence of reference directions, while the inclusion $\mathcal{D}_r \subset \mathcal{S}$ follows from \eqref{eq:negative_ridge_equivalence}. Similarly, for $\tilde{\mathcal{V}}_r$, we have $\dim(\tilde{\mathcal{V}}_r)=r$ since $\dim(\mathcal{S})=m+r$ and the inclusion $\tilde{\mathcal{V}}_r \subset \mathcal{S}$ is immediate by the definition of $\tilde{v}_i$. The following Lemma further asserts that both $\mathcal{D}_r$ and $\tilde{\mathcal{V}}_r$ are well-conditioned $r$-dimensional subspaces, a fact that will be used in the proof of our main theorem.

\begin{lem} \label{lem:well_conditioned_subspaces}
    Under Assumptions \ref{assum:spiked_covariance}--\ref{assum:reference_directions_ratios}, as $p \rightarrow \infty$, the Gram matrices $D_r^{\top}D_r$ and $\tilde{V}_r^{\top}\tilde{V}_r$ converge entrywise in probability to some random matrices $D^G$ and $\tilde{V}^G$, respectively. Moreover, $D^G$ and $\tilde{V}^G$ are almost surely non-singular.
\end{lem}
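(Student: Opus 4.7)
The plan is to write each Gram matrix as an explicit continuous function of quantities whose probability limits are delivered by Lemmas~\ref{lem:asymp_properties_PC} and \ref{lem:asymp_properties_reference_directions} together with Assumption~\ref{assum:reference_directions_non_singular}, apply the continuous mapping theorem to read off the entrywise limits, and then verify almost-sure positive definiteness of the limits (hence non-singularity) via a short decomposition argument.

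For the convergence I would start from the two identities
\[
\tilde{V}_r^{\top}\tilde{V}_r = V_r^{\top}V_r - (V_r^{\top}\hat{U}_m)(\hat{U}_m^{\top}V_r),\qquad D_r = \hat{U}_m E + \tilde{V}_r,
\]
where $E\in\mathbb{R}^{m\times r}$ has entries $E_{ji} = -\tilde{\lambda}\,\hat{u}_j^{\top}v_i/(\hat{\lambda}_j-\tilde{\lambda})$, read off directly from \eqref{eq:negative_ridge_equivalence}. The orthogonality $\hat{U}_m^{\top}\tilde{V}_r = 0$ then gives the clean identity $D_r^{\top}D_r = E^{\top}E + \tilde{V}_r^{\top}\tilde{V}_r$. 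Lemma~\ref{lem:asymp_properties_PC}(i) supplies $n\tilde{\lambda}/p\xrightarrow{P}\tau^2$ and $n(\hat{\lambda}_j-\tilde{\lambda})/p\xrightarrow{P}\phi_j(\Omega)$; the latter is strictly positive almost surely for $j\leq m$ because $\Omega = W^{\top}(I_n-J_n)W$ is a.s.\ of rank $m$ under the absolutely continuous sampling. Combined with Lemma~\ref{lem:asymp_properties_reference_directions} and Assumption~\ref{assum:reference_directions_non_singular}, the continuous mapping theorem then yields entrywise convergence in probability of both Gram matrices, with limits of the form
\[
\tilde{V}^G = V^G - APA^{\top},\qquad D^G = (E^G)^{\top}E^G + \tilde{V}^G,
\]
where $A\in\mathbb{R}^{r\times m}$ is the matrix of asymptotic inner products $A_{ij}=a_{ij}$ and $P = V(\Omega)\,\diag\bigl(\phi_k(\Omega)/(\phi_k(\Omega)+\tau^2)\bigr)\,V(\Omega)^{\top}$, with $V(\Omega)$ the orthogonal matrix of eigenvectors of $\Omega$.

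For the non-singularity, the identity $D^G = (E^G)^{\top}E^G + \tilde{V}^G$ immediately reduces the claim for $D^G$ to the claim for $\tilde{V}^G$, since adding a PSD matrix preserves strict positive definiteness. For $\tilde{V}^G$ I would use the splitting
\[
\tilde{V}^G \;=\; (V^G - AA^{\top}) \;+\; A(I_m - P)A^{\top},
\]
in which both summands are positive semidefinite: the first is the entrywise limit of the Gram matrices $V_r^{\top}(I_p - U_m U_m^{\top})V_r$, and $I_m - P = V(\Omega)\,\diag\bigl(\tau^2/(\phi_k(\Omega)+\tau^2)\bigr)\,V(\Omega)^{\top}\succ 0$ almost surely. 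If $x^{\top}\tilde{V}^G x = 0$, then both pieces must vanish on $x$; the strict positivity of $I_m - P$ forces $A^{\top}x = 0$, hence $AA^{\top}x = 0$, and combining this with $x^{\top}(V^G - AA^{\top})x = 0$ yields $x^{\top}V^G x = 0$. Since $V^G$ is positive definite (a limit of PSD Gram matrices that is non-singular by Assumption~\ref{assum:reference_directions_non_singular}), this forces $x = 0$, so $\tilde{V}^G\succ 0$ and therefore $D^G\succ 0$ almost surely.

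The only non-routine step is this last positive-definiteness argument: the naive bound $\tilde{V}^G\preceq V^G$ that one reads off directly points the wrong way, so some care is needed to rule out degeneracy of the random limit. The two-summand splitting is what makes it work, as it cleanly separates the noise contribution from the reference directions ($V^G - AA^{\top}$) from the finite-signal correction ($A(I_m-P)A^{\top}$); the strict inequality $\tau^2>0$ from Assumption~\ref{assum:spiked_covariance} is exactly what keeps $I_m - P$ strictly positive, and the non-singularity of $V^G$ then closes the kernel argument.
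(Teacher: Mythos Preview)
Your argument is correct. The convergence half is the same computation the paper carries out (you package it via $D_r = \hat U_m E + \tilde V_r$ and $D_r^\top D_r = E^\top E + \tilde V_r^\top\tilde V_r$; the paper expands $d_i^\top d_j$ directly from \eqref{eq:negative_ridge_equivalence}), so nothing materially differs there. The non-singularity half, however, follows a genuinely different route. The paper never identifies $D^G$ or $\tilde V^G$ explicitly; instead it bounds $\sigma_{\min}$ at finite $p$ via the product inequality $\sigma_{\min}(D_r)\ge \sigma_{\min}\bigl(-\tilde\lambda(S_m-\tilde\lambda I_p)^{-1}\bigr)\,\sigma_{\min}(V_r)$, checks that each factor has a strictly positive probability limit, and passes to the limit; for $\tilde V_r$ it writes $\tilde V_r=\hat U_{-m}\hat U_{-m}^\top V_r$ and invokes the same device to claim $\sigma_{\min}(\tilde V_r)\ge\sigma_{\min}(V_r)$. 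Your approach---computing the closed forms $\tilde V^G=V^G-APA^\top$ and $D^G=(E^G)^\top E^G+\tilde V^G$, then splitting $\tilde V^G=(V^G-AA^\top)+A(I_m-P)A^\top$ and running a kernel argument---buys explicit limit formulas and is arguably more careful on the $\tilde V^G$ side: the product bound $\sigma_{\min}(\hat U_{-m}^\top V_r)\ge\sigma_{\min}(\hat U_{-m})\,\sigma_{\min}(V_r)$ the paper appeals to does not hold when the left factor is wide (orthogonal projection onto $\hat{\mathcal U}_m^\perp$ can only contract singular values, not expand them), whereas your decomposition uses the strict positivity of $I_m-P$ (i.e., of $\tau^2$) together with the non-singularity of $V^G$ at exactly the points where they are needed to close the kernel argument.
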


\begin{proof}[Proof of Lemma \ref{lem:well_conditioned_subspaces}]
    We first show that $D_r^{\top}D_r$ and $\tilde{V}_r^{\top}\tilde{V}_r$ converge entrywise in probability to $r \times r$ random matrices, which we denote by $D^G$ and $\tilde{V}^G$, respectively. Using the equivalent definition of $d_i$ in \eqref{eq:negative_ridge_equivalence}, the inner product between two negatively ridged discriminant vectors is given by:

    \begin{equation} \label{eq:negative_ridge_inner_product}
        d_i^{\top}d_j = \sum_{k=1}^m \frac{\tilde{\lambda}^2(\hat{u}_k^{\top}v_j)(\hat{u}_k^{\top}v_i)}{(\hat{\lambda}_i-\tilde{\lambda})(\hat{\lambda}_j-\tilde{\lambda})} + v_i^{\top}(I_p - \hat{U}_m\hat{U}_m^{\top})v_j.
    \end{equation}
    Since $v_i^{\top}v_j$ converges to $V_{ij}^G$ due to Assumption \ref{assum:reference_directions_non_singular} and the remaining parts of \eqref{eq:negative_ridge_inner_product} converge in probability due to Lemmas \ref{lem:asymp_properties_PC} and \ref{lem:asymp_properties_reference_directions}, we conclude that $d_i^{\top}d_j$ converges in probability. Since $\tilde{v}_i^{\top}\tilde{v}_j$ is merely the second term of (\ref{eq:negative_ridge_inner_product}), $\tilde{v}_i^{\top}\tilde{v}_j$ also converges in probability.
    
    Denote the smallest singular value of the matrix $M$ as $\sigma_{min}(M)$. We will show that $D^G$ and $\tilde{V}^G$ are almost surely non-singular by verifying that $\sigma_{min}(D^G)>0$ and $\sigma_{min}(\tilde{V}^G)>0$ almost surely. Since the smallest singular value of the product of two matrices is at least the product of their smallest singular values, we obtain the following inequality:

    \begin{equation*}
        \sigma_{min}(D_r) \geq \sigma_{min}(-\tilde{\lambda}(S_m - \tilde{\lambda}I_p)^{-1}) \cdot \sigma_{min}(V_r).
    \end{equation*}
    By Assumption \ref{assum:reference_directions_non_singular}, $\sigma_{min}(V_r)$ converges to a positive value $\sqrt{\sigma_{min}(V^G)}$. Moreover, $\sigma_{min}(-\tilde{\lambda}(S_m - \tilde{\lambda}I_p)^{-1}) = \tilde{\lambda}/(\hat{\lambda}_1 - \tilde{\lambda}) \wedge 1$, which converges in probability to a positive random variable $\tau^2/\phi_1(\Omega) \wedge 1$ due to Lemma \ref{lem:asymp_properties_PC}(i). Therefore, the probability limit of $\sigma_{min}(D_r)$, $\sqrt{\sigma_{min}(D^G)}$, is greater than zero almost surely.

    Now consider the smallest singular value of $\tilde{V}_r$. Since $\tilde{V}_r = \hat{U}_{-m}\hat{U}_{-m}^{\top}V_r$, where $\hat{U}_{-m} = [\hat{u}_{m+1},\dots,\hat{u}_p]$ and $\hat{u}_{n},\dots,\hat{u}_p$ are arbitrary orthonormal basis of $\Real^p \setminus \hat{U}_{n-1}$, we obtain $\sigma_{min}(\tilde{V}_r) = \sigma_{min}(\hat{U}_{-m}^{\top}V_r)$. Since $\sigma_{min}(\hat{U}_{-m})=1$, the inequality of the smallest singular values implies $\sigma_{min}(\tilde{V}_r) \geq \sigma_{min}(V_r)$. Therefore, the probability limit of $\sigma_{min}(\tilde{V}_r)$, $\sqrt{\sigma_{min}(\tilde{V}^G)}$, is greater than zero almost surely due to Assumption \ref{assum:reference_directions_non_singular}.
\end{proof}

A key idea in defining our estimator is that the negatively ridged discriminant vectors $d_1,\dots,d_r$ are asymptotically orthogonal to the true PC subspace $\mathcal{U}_m$. At first glance, this result may seem counterintuitive: while $\mathcal{U}_m$ is an unknown parameter,  $d_1,\dots,d_r$ are obtained entirely from data. 
The intuition behind this phenomenon lies in the structure of the signal subspace $\mathcal{S}$. Within $\mathcal{S}$, it is possible to construct a vector that preserves the inner product between the true PC direction $u_i$ and the reference direction $v_j$, by compensating for the loss induced by the discrepancy between $\mathcal{U}_m$ and the sample PC subspace $\hat{\mathcal{U}}_m$. These insights are central to the proof of the next result.


\begin{thm} \label{thm:orthogonality}
    Under Assumptions \ref{assum:spiked_covariance}, \ref{assum:rho_mixing}, and \ref{assum:reference_directions_ratios}, as $p \rightarrow \infty$, $\operatorname{Angle}(u_i,d_j) \xrightarrow{P} \pi/2$ for $i=1,\dots,m$ and $j = 1,\dots,r$.
\end{thm}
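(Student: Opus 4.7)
The plan is to show $u_i^{\top} d_j \xrightarrow{P} 0$ and $\|d_j\|$ converges to a positive quantity, so that the cosine of the angle between $u_i$ and $d_j$ tends to zero in probability.

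First I would compute $u_i^{\top} d_j$ directly using the explicit form \eqref{eq:negative_ridge_equivalence} of $d_j$. Expanding $u_i^{\top}\tilde{v}_j = u_i^{\top}v_j - \sum_{k=1}^m (u_i^{\top}\hat{u}_k)(\hat{u}_k^{\top}v_j)$ and combining the two resulting sums, the coefficients merge into $1 + \tilde{\lambda}/(\hat{\lambda}_k - \tilde{\lambda}) = \hat{\lambda}_k/(\hat{\lambda}_k - \tilde{\lambda})$, giving the clean identity
\begin{equation*}
    u_i^{\top} d_j \;=\; u_i^{\top}v_j \;-\; \sum_{k=1}^m (u_i^{\top}\hat{u}_k)(\hat{u}_k^{\top}v_j)\,\frac{\hat{\lambda}_k}{\hat{\lambda}_k-\tilde{\lambda}}.
\end{equation*}
This closed form is the backbone of the argument: the factor $\hat{\lambda}_k/(\hat{\lambda}_k - \tilde{\lambda})$ is precisely what ``compensates'' for the HDLSS inconsistency mentioned in the informal discussion before the theorem statement.

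Next I would pass to the limit term by term. By Lemma~\ref{lem:asymp_properties_PC}(i), $\hat{\lambda}_k/(\hat{\lambda}_k - \tilde{\lambda}) \xrightarrow{P} (\phi_k(\Omega)+\tau^2)/\phi_k(\Omega)$ for $k=1,\dots,m$. By Lemma~\ref{lem:asymp_properties_PC}(ii) and Lemma~\ref{lem:asymp_properties_reference_directions}, the product $(u_i^{\top}\hat{u}_k)(\hat{u}_k^{\top}v_j)$ converges in probability to $\frac{\phi_k(\Omega)}{\phi_k(\Omega)+\tau^2}\,v_{ki}(\Omega)\sum_{l=1}^m a_{jl}v_{kl}(\Omega)$, which crucially cancels the ratio above. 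Putting these together and using Assumption~\ref{assum:reference_directions_ratios} for $u_i^{\top} v_j \xrightarrow{P} a_{ji}$, I arrive at
\begin{equation*}
    u_i^{\top}d_j \;\xrightarrow{P}\; a_{ji} \;-\; \sum_{l=1}^m a_{jl}\left(\sum_{k=1}^m v_{ki}(\Omega)v_{kl}(\Omega)\right).
\end{equation*}
The inner sum is the $(i,l)$ entry of $V(\Omega)V(\Omega)^{\top}$ restricted to the top $m$ eigenvectors of $\Omega$; since $\Omega$ is an $m \times m$ symmetric matrix almost surely, its eigenvectors form an orthogonal basis and the inner sum equals $\delta_{il}$. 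Hence the limit is $a_{ji} - a_{ji} = 0$.

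Finally, I would show that $\|d_j\|$ stays away from zero asymptotically. If Assumption~\ref{assum:reference_directions_non_singular} is in force, this is immediate from Lemma~\ref{lem:well_conditioned_subspaces}; otherwise a direct computation from \eqref{eq:negative_ridge_inner_product} using Lemmas~\ref{lem:asymp_properties_PC} and \ref{lem:asymp_properties_reference_directions} expresses $\lim \|d_j\|^2$ as $1 - a_j^2 + \tau^2\sum_{k=1}^m \phi_k(\Omega)^{-1}(\sum_l a_{jl}v_{kl}(\Omega))^2$, which is strictly positive almost surely. Combining with the continuous mapping theorem applied to $\cos(\operatorname{Angle}(u_i,d_j)) = u_i^{\top}d_j/\|d_j\|$ yields the claim. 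The main obstacle is the precise algebraic cancellation in the middle step: one has to recognize that the seemingly miraculous combination of Lemmas~\ref{lem:asymp_properties_PC} and \ref{lem:asymp_properties_reference_directions} with the ridge factor is exactly engineered so that the $\phi_k/(\phi_k+\tau^2)$ attenuations disappear and the orthogonality $V(\Omega)V(\Omega)^{\top}=I_m$ can be invoked to collapse the double sum to $a_{ji}$.
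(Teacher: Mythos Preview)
Your proposal is correct and follows essentially the same approach as the paper: both derive the identity $u_i^{\top}d_j = u_i^{\top}v_j - \sum_{k=1}^m \frac{\hat{\lambda}_k}{\hat{\lambda}_k-\tilde{\lambda}}(u_i^{\top}\hat{u}_k)(\hat{u}_k^{\top}v_j)$, invoke Lemmas~\ref{lem:asymp_properties_PC} and \ref{lem:asymp_properties_reference_directions} so that the ridge factor cancels the $\phi_k/(\phi_k+\tau^2)$ attenuation, and use $V(\Omega)V(\Omega)^{\top}=I_m$ to collapse the double sum to $a_{ji}$. The only minor difference is in the nondegeneracy of $\|d_j\|$: the paper bounds it below by $\|\tilde{v}_j\|^2$ (via orthogonality in \eqref{eq:negative_ridge_equivalence}) and shows this limit is positive, whereas you compute the full limit of $\|d_j\|^2$ directly---both arguments are valid and neither requires Assumption~\ref{assum:reference_directions_non_singular}, which you correctly flag as absent from the theorem's hypotheses.
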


\begin{proof}[Proof of Theorem \ref{thm:orthogonality}]

    We first prove that, for $i=1,\dots,m$ and $j=1,\dots,r$,
    
    \begin{equation} \label{eq:inner_product}
        u_i^{\top}d_j = u_i^{\top}v_j - u_i^{\top}\sum_{k=1}^m \frac{\hat{\lambda}_k\hat{u}_k^{\top}v_j}{\hat{\lambda}_k - \tilde{\lambda}}\hat{u}_k
    \end{equation}
    converges in probability to 0 as $p \rightarrow \infty$. The first term $u_i^{\top}v_j$ of \eqref{eq:inner_product} converges to $a_{ji}$ by Assumption \ref{assum:reference_directions_ratios}. We will show that the second term, $u_i^{\top}\sum_{k=1}^m \frac{\hat{\lambda}_k\hat{u}_k^{\top}v_j}{\hat{\lambda}_k - \tilde{\lambda}}\hat{u}_k$, also converges in probability to $a_{ji}$.

    Consider the orthogonal projection of $v_j$ onto $\hat{\mathcal{U}}_m$, $P_{\hat{\mathcal{U}}_m}v_j = \sum_{k=1}^m (\hat{u}_k^{\top}v_j)\hat{u}_k$. Although $u_i^{\top}v_j$ converges to $a_{ji}$, due to the discrepancy between the true and sample PC subspaces, the probability limit of $u_i^{\top}P_{\hat{\mathcal{U}}_m}v_j = u_i^{\top}\sum_{k=1}^m(\hat{u}_k^{\top}v_j)\hat{u}_k$ should be smaller than the desired value $a_{ji}$. Applying Lemmas \ref{lem:asymp_properties_PC} and \ref{lem:asymp_properties_reference_directions}, we obtain the following asymptotic property:

    \begin{equation} \label{eq:without_scaling}
        u_i^{\top}\sum_{k=1}^m (\hat{u}_k^{\top}v_j)\hat{u}_k\xrightarrow{P} \sum_{k=1}^m \sum_{l=1}^m  \frac{\phi_k(\Omega)}{\phi_k(\Omega) + \tau^2}  a_{jl} v_{kl}(\Omega) v_{ki}(\Omega).
    \end{equation}
    
    The value $\frac{\phi_k(\Omega)}{\phi_k(\Omega) + \tau^2}$ can be interpreted as the proportion of the inner product preserved by the projection onto $\hat{u}_k$. To see this, note that removing the factor $\frac{\phi_k(\Omega)}{\phi_k(\Omega) + \tau^2}$ from the right-hand side of \eqref{eq:without_scaling} yields

    \begin{equation*}
        \sum_{k=1}^m \sum_{l=1}^m a_{jl} v_{kl}(\Omega) v_{ki}(\Omega) = a_{ji},
    \end{equation*}
    which is exactly the desired inner product value. The equality follows from the fact that $V(\Omega) = [v_1(\Omega),\dots,v_m(\Omega)]$ is an orthonormal matrix.
    
    Then, by scaling each component $(\hat{u}_k^{\top}v_j)\hat{u}_k$ by $\frac{\phi_k(\Omega) + \tau^2}{\phi_k(\Omega)}$, the inverse of the proportion of the inner product preserved, we obtain a vector that satisfies the desired asymptotic property. Although $\frac{\phi_k(\Omega) + \tau^2}{\phi_k(\Omega)}$ cannot be computed from data, Lemma \ref{lem:asymp_properties_PC}(i) implies that it can be consistently estimated by $\frac{\hat{\lambda}_k}{\hat{\lambda}_k - \tilde{\lambda}}$. Therefore, the scaled version of the second term in \eqref{eq:inner_product}, given by $u_i^{\top}\sum_{k=1}^m \frac{\hat{\lambda}_k\hat{u}_k^{\top}v_j}{\hat{\lambda}_k - \tilde{\lambda}}\hat{u}_k$, converges in probability to $a_{ji}$ as $p \rightarrow \infty$.

    To complete the proof, we show that $\norm{d_j}^2$ does not degenerate asymptotically. Due to the equivalent definition of $d_j$ in \eqref{eq:negative_ridge_equivalence}, it is enough to show that $\norm{\tilde{v}_j}^2$ converges in probability to a strictly positive random variable. Applying Lemmas \ref{lem:asymp_properties_PC} and \ref{lem:asymp_properties_reference_directions}, we obtain the following asymptotic property:
    \begin{equation} \label{eq:v_tilde_norm_asymptotics}
        \norm{\tilde{v}_j}^2 = 1-v_j^{\top}\hat{U}_m\hat{U}_m^{\top}v_j \xrightarrow{P} 1 - \sum_{i=1}^m \frac{\phi_i(\Omega)}{\phi_i(\Omega) + \tau^2} \left(\sum_{k=1}^ma_{jk} v_{ik}(\Omega)\right)^2
    \end{equation}
    Let $a_j^{vec} = (a_{j1},\dots,a_{jm})^{\top}$. Since $V(\Omega)$ is an orthogonal matrix and $\norm{a_j^{vec}}^2 = a_j^2 \leq 1$,
    \begin{equation*}
        \sum_{i=1}^m \left(\sum_{k=1}^ma_{jk} v_{ik}(\Omega)\right)^2 = \norm{V(\Omega)^{\top}a_j^{vec}}^2 = \norm{a_j^{vec}}^2 \leq 1,
    \end{equation*}
    which implies that the second term of probability limit in \eqref{eq:v_tilde_norm_asymptotics} is strictly less than one since $\tau^2>0$.
\end{proof}

\subsection{The single-spike, single-reference case}

Before turning to the general setting involving multiple spikes and multiple reference directions, we begin by examining the simplest scenario of a single-spike and a single-reference direction. A detailed investigation of this basic case provides deeper insight into the intuition and properties of the propposed estimator. 

We define the \textit{Adaptive Reference-Guided (ARG) estimator} specifically for the single-spike, single-reference case as the orthogonal complement of the negatively ridged discriminant vector $d_1$ within the signal subspace $\mathcal{S} = \operatorname{span}(\hat{u}_1,v_1)$:

\begin{equation*}
    \hat{u}_1^{ARG} := \mathcal{S} \setminus d_1.
\end{equation*}

Figure \ref{fig:idea} illustrates the core idea behind the ARG estimator. By Theorem \ref{thm:orthogonality}, the negatively ridged discriminant vector $d_1$ is asymptotically orthogonal to the true PC direction $u_1$. Therefore, we define $\hat{u}_1^{ARG}$ as the orthogonal complement of $d_1$ within the signal subspace $\mathcal{S}$, ensuring that $\hat{u}_1^{ARG}$ asymptotically aligns with the orthogonal projection $P_{\mathcal{S}}u_1$.

\begin{figure}
    \centering
    \includegraphics[width=0.63\linewidth]{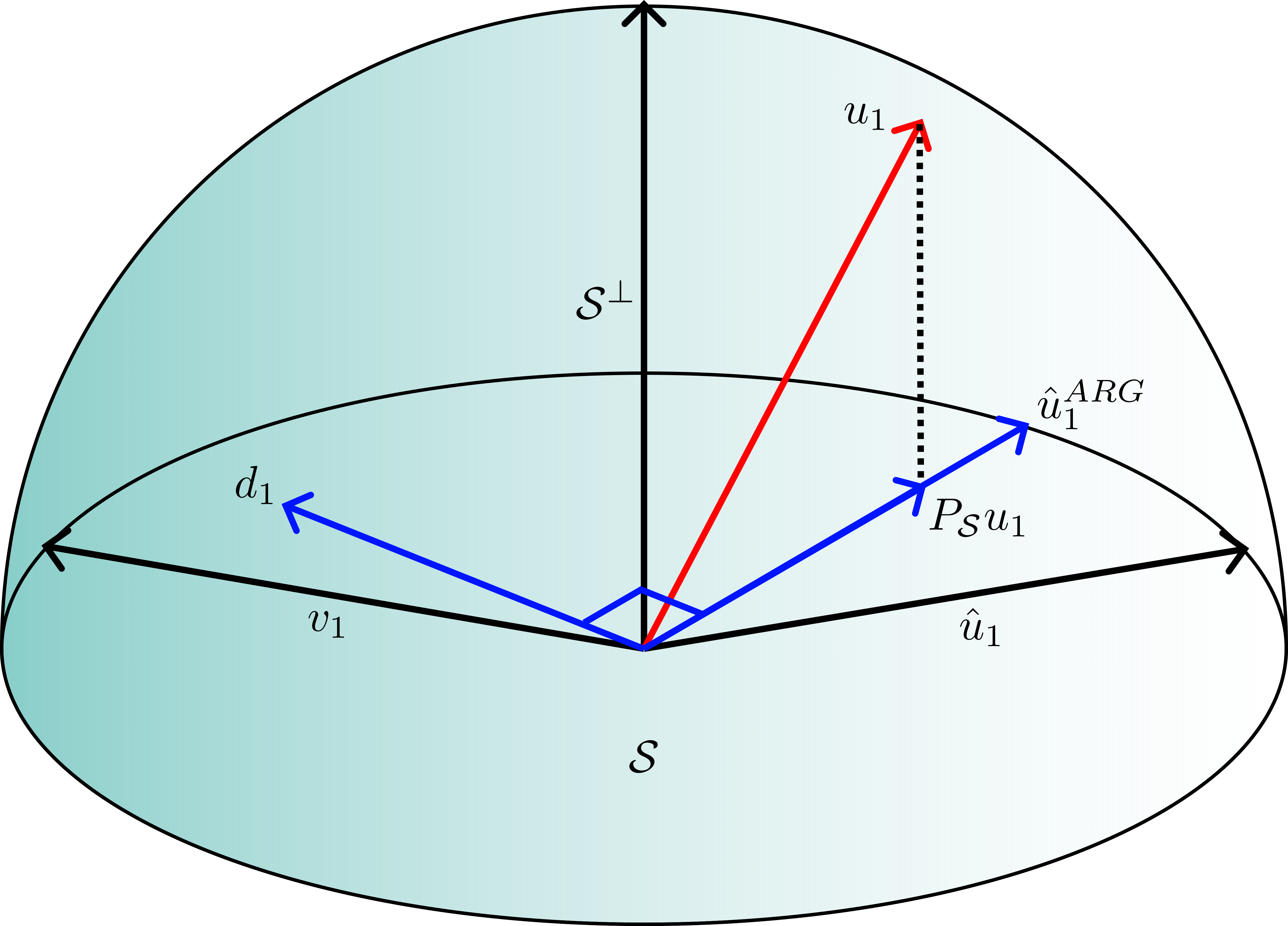}
    \caption{Geometric representation of the ARG estimator in the single-spike, single-reference setting. The signal subspace $\mathcal{S} = \operatorname{span}(\hat{u}_1,v_1)$ is depicted as the $x-y$ plane, while the $z$-axis represents the component orthogonal to $\mathcal{S}$. The negatively ridged discriminant vector $d_1$ (blue) is asymptotically orthogonal to the true PC direction $u_1$ (red). The ARG estimator $\hat{u}_1^{ARG}$ (blue) is then defined as the orthogonal complement of $d_1$ within $\mathcal{S}$, aligning asymptotically with $P_{\mathcal{S}}u_1$ (blue). Notably, $\operatorname{Angle}(\hat{u}_1^{ARG},u_1)$ is asymptotically smaller than $\operatorname{Angle}(\hat{u}_1,u_1)$, demonstrating the improved asymptotic efficiency of the ARG estimator.}
    \label{fig:idea}
\end{figure}

Using the equivalent definition of negatively ridged discriminant vector $d_1$ in \eqref{eq:negative_ridge_equivalence}, we obtain a closed-form expression for $\hat{u}_1^{ARG}$:
\begin{equation} \label{eq:u1_ARG}
    \hat{u}_1^{ARG} \propto  \left(\frac{\hat{\lambda}_1}{\tilde{\lambda}}(1-(\hat{u}_1^{\top}v_1)^2)-1\right)\hat{u}_1 + (\hat{u}_1^{\top}v_1)v_1.
\end{equation}
This expression exactly coincides with the James-Stein estimator that shrinks the first eigenvector $\hat{u}_1$ towards the reference direction $v_1$,  
proposed in 
\cite{Shkolnik2022}. An intuitive explanation for the equivalence between the ARG and James-Stein estimators will be provided in Section \ref{sec:general_case}, in the context of the general multi-spike, multi-reference scenario.

We next examine how the amount of information $a_1$ influences the ARG estimator $\hat{u}_1^{ARG}$. Recall that $a_1 = \lim_{p\to\infty} |u_1^\top v_1|$ (Assumption~\ref{assum:reference_directions_ratios}). If $a_1=0$, indicating that the reference direction $v_1$ carries no information about the true PC direction $u_1$, then Lemma \ref{lem:asymp_properties_reference_directions} implies that the coefficient of $v_1$ in \eqref{eq:u1_ARG}, $\hat{u}_1^{\top}v_1$, converges in probability to zero. Consequently, $\hat{u}_1^{ARG} \approx \hat{u}_1$ holds asymptotically, indicating that the ARG estimator yields no improvement when no information is available. On the other hand, if $a_1=1$, meaning that $v_1$ perfectly aligns with $u_1$ in the limit, applying Lemmas \ref{lem:asymp_properties_PC} and \ref{lem:asymp_properties_reference_directions} shows that the coefficient of $\hat{u}_1$ in \eqref{eq:u1_ARG}, $\frac{\hat{\lambda}_1}{\tilde{\lambda}}(1-(\hat{u}_1^{\top}v_1)^2)-1$, converges in probability to zero. This implies that $\hat{u}_1^{ARG} \approx v_1 \approx u_1$ asymptotically, demonstrating that full information leads to full recovery of the true PC direction. These observations highlight the crucial role of the information quantity $a_1$ in the performance of the ARG estimator. This relationship is formally stated in the following theorem, which characterizes the limiting behavior of the inner product between $\hat{u}_1^{ARG}$ and $u_1$.

\begin{thm} \label{thm:u1_asymptotic}
    Suppose Assumptions \ref{assum:spiked_covariance}, \ref{assum:rho_mixing} and \ref{assum:reference_directions_ratios} hold with $m=r=1$. Then, as $p \rightarrow \infty$, the ARG estimator $\hat{u}_1^{ARG}$ exhibits the following asymptotic behavior:
    \[
        (\hat{u}_1^{ARG})^{\top} u_1 \xrightarrow{P} \sqrt{\left(\frac{\Omega}{\Omega + \tau^2}\right) \cdot \left(1+\frac{\tau^4 a_1^2}{\Omega^2(1-a_1^2) + \Omega\tau^2}\right)}.
    \]
\end{thm}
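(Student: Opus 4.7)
The plan is to evaluate the probability limit of $(\hat{u}_1^{ARG})^\top u_1$ directly from the closed-form expression \eqref{eq:u1_ARG} by applying Lemmas~\ref{lem:asymp_properties_PC} and \ref{lem:asymp_properties_reference_directions} specialized to $m=r=1$, together with Assumption~\ref{assum:reference_directions_ratios}, and then simplifying the resulting rational function in $\Omega, \tau^2, a_1^2$ to match the stated form. Setting $c := \hat{u}_1^\top v_1$ and $\alpha := (\hat{\lambda}_1/\tilde{\lambda})(1-c^2) - 1$, the vector $w := \alpha \hat{u}_1 + c v_1$ spans the line $\hat{u}_1^{ARG}$, so
\[
\bigl|(\hat{u}_1^{ARG})^\top u_1\bigr| = \frac{\alpha(\hat{u}_1^\top u_1) + c(v_1^\top u_1)}{\sqrt{\alpha^2 + c^2 + 2\alpha c^2}},
\]
where the denominator uses $\|w\|^2 = \alpha^2 + c^2 + 2\alpha c(\hat{u}_1^\top v_1) = \alpha^2 + c^2(1+2\alpha)$.

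When $m=r=1$, the matrix $\Omega$ collapses to a scalar and the $1\times 1$ eigenvector satisfies $v_{11}(\Omega) = 1$. Lemma~\ref{lem:asymp_properties_PC} then gives $\hat{u}_1^\top u_1 \xrightarrow{P} \sqrt{\Omega/(\Omega+\tau^2)}$ and $\hat{\lambda}_1/\tilde{\lambda} \xrightarrow{P} (\Omega+\tau^2)/\tau^2$; Lemma~\ref{lem:asymp_properties_reference_directions} gives $c \xrightarrow{P} \sqrt{\Omega/(\Omega+\tau^2)}\, a_{11}$; and Assumption~\ref{assum:reference_directions_ratios} gives $v_1^\top u_1 \to a_{11}$ with $a_{11}^2 = a_1^2$. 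A short calculation produces $\alpha \xrightarrow{P} \Omega(1-a_1^2)/\tau^2$. Writing $b := 1-a_1^2$ and pushing these limits through the continuous mapping theorem yields
\[
(\text{num})^2 \xrightarrow{P} \frac{\Omega(\Omega b + a_1^2\tau^2)^2}{\tau^4(\Omega+\tau^2)}, \qquad (\text{den})^2 \xrightarrow{P} \frac{\Omega\bigl[\Omega^2 b^2 + \Omega b^2\tau^2 + a_1^2\tau^4 + 2\Omega a_1^2 b\tau^2\bigr]}{\tau^4(\Omega+\tau^2)}.
\]

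The essential algebraic step is to factor the bracket in $(\text{den})^2$ as $(\Omega b + a_1^2\tau^2)\bigl[b(\Omega+\tau^2) + a_1^2\tau^2\bigr]$: completing the square $(\Omega b + a_1^2\tau^2)^2$ using the cross term $2\Omega a_1^2 b\tau^2$ leaves a residual $\Omega b^2\tau^2 + a_1^2(1-a_1^2)\tau^4$, which equals $b\tau^2(\Omega b + a_1^2\tau^2)$. A common factor of $\Omega b + a_1^2\tau^2$ then cancels, giving
\[
\bigl[(\hat{u}_1^{ARG})^\top u_1\bigr]^2 \xrightarrow{P} \frac{\Omega b + a_1^2\tau^2}{b(\Omega+\tau^2) + a_1^2\tau^2}.
\]
A routine cross-multiplication confirms this equals $\frac{\Omega}{\Omega+\tau^2}\bigl(1 + \frac{\tau^4 a_1^2}{\Omega^2(1-a_1^2) + \Omega\tau^2}\bigr)$, so taking the positive square root completes the proof; the sign is unambiguous because both summands of $\alpha(\hat{u}_1^\top u_1) + c(v_1^\top u_1)$ have non-negative probability limits (as $\alpha \geq 0$ in the limit and $c\cdot(v_1^\top u_1) \to \sqrt{\Omega/(\Omega+\tau^2)}\, a_{11}^2 \geq 0$).

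The main obstacle is locating the factorization of the denominator bracket: without it, one is stuck with an ostensibly irreducible four-term polynomial in $\Omega,\tau^2,a_1^2$ that is hard to match against the compact expression in the statement, but once the perfect-square-plus-residual trick is identified, the remaining verification is pure bookkeeping with the cited lemmas and the continuous mapping theorem.
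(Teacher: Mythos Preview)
Your proof is correct and takes essentially the same approach as the paper: both apply Lemmas~\ref{lem:asymp_properties_PC} and~\ref{lem:asymp_properties_reference_directions} specialized to $m=r=1$ (so that $\phi_1(\Omega)=\Omega$ and $v_{11}(\Omega)=1$) and invoke the continuous mapping theorem. The paper's proof is a one-line pointer to those lemmas, whereas you have carried out the algebraic simplification in full, including the factorization of the denominator, which the paper leaves entirely to the reader.
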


\begin{proof} [Proof of Theorem \ref{thm:u1_asymptotic}]
    The result follows directly from Lemmas \ref{lem:asymp_properties_PC} and \ref{lem:asymp_properties_reference_directions}. Noting that $\Omega$ is scalar when $m = 1$, we simplify $\phi_1(\Omega) = \Omega$ and $v_{11}(\Omega) = 1$, which completes the proof.
\end{proof}

Combining Theorem \ref{thm:u1_asymptotic} and Lemma \ref{lem:asymp_properties_PC}(ii), we obtain the following asymptotic ratio between $(\hat{u}_1^{ARG})^{\top}u_1$ and $\hat{u}_1^{\top}u_1$:

\begin{equation*}
    \frac{(\hat{u}_1^{ARG})^{\top}u_1}{\hat{u}_1^{\top}u_1} \xrightarrow{P} \sqrt{1+\frac{\tau^4 a_1^2}{\Omega^2(1-a_1^2) + \Omega\tau^2}}.
\end{equation*}
The term inside the square root is at least 1. Hence, $\hat{u}_1^{ARG}$ is asymptotically closer to the true PC direction $u_1$ than the sample direction $\hat{u}_1$. Moreover, this quantity is an increasing function of both $\tau^2$ and $a_1$, but a decreasing function of $\Omega = \sigma_1^2z_1^{\top}(I_n - J_n)z_1$, indicating that the improvement is more significant when:

\begin{enumerate}[label=(\roman*)]
    \item The reference direction $v_1$ contains more information about the true PC direction $u_1$ (i.e., larger $a_1$).
    \item The scaled signal-to-noise ratio $\sigma_1^2/\tau^2$ is smaller.
\end{enumerate}

While the preceding analysis focused on the single-spike, single-reference setting, similar intuition extends to the general multi-spike, multi-reference situation. In this broader setting, we rigorously establish that the ARG estimator outperforms the naive sample PC subspace. Although characterizing the precise conditions under which it achieves greater accuracy remains intractable, it is natural to expect that the performance of the ARG estimator improves as the reference directions contain more prior information about the true PC subspace. We present a more detailed theoretical analysis of the general setting in the next section.

\subsection{The multi-spike, multi-reference case} \label{sec:general_case}

In this section, we define the Adaptive Reference-Guided (ARG) estimator for the general multi-spike, multi-reference case and demonstrate that it achieves improved asymptotic efficiency compared to the naive sample PC subspace $\hat{\mathcal{U}}_m$, provided that the reference directions $v_1,\dots,v_r$ carry nontrivial information about the true PC subspace $\mathcal{U}_m$. As in the single-spike, single-reference direction case, the ARG estimator is defined as the orthogonal complement of the negatively ridged discriminant subspace $\mathcal{D}_r$ within the signal subspace $\mathcal{S}$:
\begin{equation}
    \hat{\mathcal{U}}_m^{ARG} := \mathcal{S} \setminus \mathcal{D}_r.
\label{eq:ARGgen}
\end{equation}
The key intuition behind this estimator stems from the fact that the negatively ridged discriminant vectors $d_1,\dots,d_r$ are asymptotically orthogonal to $\mathcal{U}_m$, as established in Theorem \ref{thm:orthogonality}. This crucial property ensures that $\hat{\mathcal{U}}_m^{ARG}$ consistently approximates $P_{\mathcal{S}}\mathcal{U}_m$, making it the most asymptotically accurate subspace estimate of $\mathcal{U}_m$ within $\mathcal{S}$.

Next, we present an equivalent formulation of the ARG estimator $\hat{\mathcal{U}}_m^{ARG}$, expressed in terms of a non-orthonormal basis. In contrast to the original definition in (\ref{eq:ARGgen}), which is based on taking the orthogonal complement, this alternative formulation enables direct computation by explicitly providing a basis matrix for the estimated subspace. This representation arises naturally from the definition of the negatively ridged discriminant vectors $d_1,\dots,d_r$ (see (\ref{eq:negative_ridge})), yielding the following identity:
\begin{equation*}
    D_r^{\top}(S_m - \tilde{\lambda}I_p)(I_p-P_{\mathcal{V}_r})\hat{U}_m=0_{r \times m}.
\end{equation*}
This implies that the columns of the $p \times m$ matrix $(S_m - \tilde{\lambda}I_p)(I_p-P_{\mathcal{V}_r})\hat{U}_m$ are orthogonal to the negatively ridged discriminant subspace $\mathcal{D}_r$. The matrix $(S_m - \tilde{\lambda}I_p)(I_p - P_{\mathcal{V}_r})\hat{U}_m$ spans an $m$-dimensional subspace contained in the signal subspace $\mathcal{S}$, owing to the non-singularity of $S_m - \tilde{\lambda}I_p$ and the assumption that the data are sampled from an absolutely continuous distribution. Hence, we obtain the following equivalent definition of the ARG estimator:
\begin{equation*}
    \hat{\mathcal{U}}_m^{ARG} = \operatorname{span}((S_m - \tilde{\lambda}I_p)(I_p-P_{\mathcal{V}_r})\hat{U}_m).
\end{equation*}

To our surprise, the ARG estimator $\hat{\mathcal{U}}_m^{ARG}$ turns out to be exactly equivalent to the James-Stein estimator for the general multi-spike, multi-reference case, proposed in \cite{Shkolnik2025}. This equivalence arises naturally from the fact that both estimators aim to find the subspace within the signal subspace $\mathcal{S}$ that is asymptotically closest to the true PC subspace $\mathcal{U}_m$. The ARG estimator achieves this by taking the orthogonal complement of the negatively ridged discriminant subspace $\mathcal{D}_r$, utilizing the asymptotic orthogonality of Theorem \ref{thm:orthogonality}. On the other hand, the James-Stein estimator applies a shrinkage transformation to the sample PC subspace $\hat{\mathcal{U}}_m$, pulling it toward the subspace spanned by the reference directions $v_1,\dots,v_r$. Since shrinkage is a linear transformation within $\mathcal{S}$, it must yield the same subspace as the ARG estimator, which is the closest possible approximation of $\mathcal{U}_m$ inside $\mathcal{S}$ asymptotically.

To analyze the asymptotic efficiency of the ARG estimator $\hat{\mathcal{U}}_m^{ARG}$, we begin by developing the technical tools necessary to describe the asymptotic relationship between the singular values of two uniformly bounded sequences of random matrices. Let $\sigma_k(M)$ denote the $k$th largest singular value of a matrix $M$, and let $S^{m-1}$ denote the unit sphere in $\mathbb{R}^m$.

\begin{lem} \label{lem:singular_value_asymptotics}
    Let $\{A_p\}_{p=1}^\infty$ and $\{B_p\}_{p=1}^\infty$ be sequences of $m \times m$ random matrices satisfying $\norm{A_p}_2 \leq 1$ and $\norm{B_p}_2 \leq 1$ for all $p$. Then, as $p \rightarrow \infty$, the following asymptotic properties hold:
    
    \begin{enumerate}[label=(\roman*)]
        \item If $\norm{A_pc} - \norm{B_pc} \xrightarrow{P} X_c$ as $p \rightarrow \infty$ holds for all $c \in S^{m-1}$, where $\inf_{c \in S^{m-1}} X_c > 0$ almost surely, then $\mathbb{P}(\sigma_k(A_p) > \sigma_k(B_p)) \rightarrow 1$ for $k=1,\dots,m$.
        \item If $\norm{A_pc} - \norm{B_pc} \xrightarrow{P} X_c$ as $p \rightarrow \infty$ holds for all $c \in S^{m-1}$, where $X_c>0$ for all $c \in S^{m-1}$ and $\inf_{c \in S^{m-1}} X_c = 0$ almost surely, then $\mathbb{P}(\sigma_k(A_p) > \sigma_k(B_p) - \eps) \rightarrow 1$ for any $\eps>0$ and $k=1,\dots,m$.
        \item If $\norm{A_pc} - \norm{B_pc} \xrightarrow{P}0$ as $p \rightarrow \infty$ holds for all $c \in S^{m-1}$, then $\sigma_k(A_p) - \sigma_k(B_p) \xrightarrow{P}0$ for $k=1,\dots,m$.
    \end{enumerate}

\end{lem}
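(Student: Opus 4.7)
The plan is to reduce the singular value comparison to a uniform estimate of $c \mapsto \|A_p c\| - \|B_p c\|$ over the compact unit sphere $S^{m-1}$. The Courant--Fischer min-max characterization converts singular values into infima over subspaces, while the uniform operator-norm bound $\|A_p\|_2, \|B_p\|_2 \leq 1$ makes $c \mapsto \|A_p c\|$ and $c \mapsto \|B_p c\|$ 1-Lipschitz, enabling an $\eta$-net reduction from pointwise convergence at uncountably many $c$ to a finite probabilistic statement at finitely many net points.

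The first step is the deterministic sandwich
\begin{equation*}
  \inf_{c \in S^{m-1}}\bigl(\|A_p c\| - \|B_p c\|\bigr) \;\leq\; \sigma_k(A_p) - \sigma_k(B_p) \;\leq\; \sup_{c \in S^{m-1}}\bigl(\|A_p c\| - \|B_p c\|\bigr).
\end{equation*}
To obtain the lower bound, observe that $\|A_p c\| \geq \|B_p c\| + \inf_{c' \in S^{m-1}}(\|A_p c'\| - \|B_p c'\|)$ for every $c$. Taking the infimum over $c \in V \cap S^{m-1}$ for a $k$-dimensional subspace $V$ and then the supremum over $V$ yields $\sigma_k(A_p) \geq \sigma_k(B_p) + \inf_{c}(\|A_p c\| - \|B_p c\|)$. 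The upper bound is symmetric. This deterministic pair reduces all three claims to controlling a single functional of $c$ in the appropriate direction.

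The second step is the net reduction. Given $\eta > 0$, fix a finite set $\{c_1,\dots,c_N\} \subset S^{m-1}$ such that every $c \in S^{m-1}$ lies within $\eta$ of some $c_j$. For such a $c$, the 1-Lipschitz property gives $|(\|A_p c\| - \|B_p c\|) - (\|A_p c_j\| - \|B_p c_j\|)| \leq 2\eta$, so both $\inf_c(\|A_p c\| - \|B_p c\|)$ and $\sup_c(\|A_p c\| - \|B_p c\|)$ differ by at most $2\eta$ from their minimum or maximum over the finite net. Since convergence in probability is stable under finite intersections, one can pass uniform bounds through the net.

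The three conclusions then follow by tuning $\eta$. For part (iii), choose $\eta = \eps/3$; since $\|A_p c_j\| - \|B_p c_j\| \xrightarrow{P} 0$ at each of the $N$ net points, $\sup_c|\|A_p c\| - \|B_p c\|| < \eps$ with probability tending to one, yielding $|\sigma_k(A_p) - \sigma_k(B_p)| \xrightarrow{P} 0$. For part (i), the a.s.\ positivity of $\inf_c X_c$ gives, for any $\eta' > 0$, a deterministic $\delta_0 > 0$ with $\mathbb{P}(\inf_c X_c \geq \delta_0) \geq 1 - \eta'$; choosing $\eta = \delta_0/4$ and intersecting with the high-probability event $\{|(\|A_p c_j\| - \|B_p c_j\|) - X_{c_j}| < \delta_0/4 \text{ for all } j\}$ yields $\inf_c(\|A_p c\| - \|B_p c\|) \geq \delta_0/4 > 0$, whence $\sigma_k(A_p) > \sigma_k(B_p)$ with probability tending to one. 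For part (ii), given $\eps > 0$ choose $\eta = \eps/3$; since $X_{c_j} > 0$ almost surely, the elementary bound $\mathbb{P}(\|A_p c_j\| - \|B_p c_j\| \leq -\eps/3) \leq \mathbb{P}(|(\|A_p c_j\| - \|B_p c_j\|) - X_{c_j}| \geq \eps/3) \to 0$ holds at each net point, and intersecting over the finite net with the net approximation gives $\inf_c(\|A_p c\| - \|B_p c\|) > -\eps$ with probability tending to one. The main obstacle throughout is the passage from pointwise to uniform control on the uncountable sphere; once the uniform Lipschitz constant is invoked this dissolves, and the remaining work is merely calibrating $\eta$ to the positivity structure of $X_c$ prescribed by each case.
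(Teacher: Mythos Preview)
Your proposal is correct and follows essentially the same route as the paper: both use the Courant--Fischer min-max characterization to obtain the sandwich $\inf_c(\|A_pc\|-\|B_pc\|)\le\sigma_k(A_p)-\sigma_k(B_p)\le\sup_c(\|A_pc\|-\|B_pc\|)$, exploit the $1$-Lipschitz property of $c\mapsto\|A_pc\|,\|B_pc\|$ (from $\|A_p\|_2,\|B_p\|_2\le1$) to reduce the sphere to a finite $\eta$-net, and then tune $\eta$ case by case. The only cosmetic differences are that you state the full sandwich up front and, in part~(i), fix a deterministic threshold $\delta_0$ before choosing the net, whereas the paper works with the random threshold $\tfrac12\inf_c X_c$ and sends the net mesh to zero at the end; both arguments are equivalent in substance.
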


A proof of Lemma \ref{lem:singular_value_asymptotics} is provided in the \nameref{sec:supporting_information}. We now state our main result, which establishes the asymptotic efficiency of the ARG estimator $\hat{\mathcal{U}}_m^{ARG}$. Specifically, we show that when the reference directions $v_i$ carry nontrivial information about the true PC subspace $\mathcal{U}_m$, $\hat{\mathcal{U}}_m^{ARG}$ asymptotically achieves a more accurate estimate of $\mathcal{U}_m$ compared to the naive sample PC subspace $\hat{\mathcal{U}}_m$. In contrast, when the reference directions contain no information, $\hat{\mathcal{U}}_m^{ARG}$ offers no additional improvement over $\hat{\mathcal{U}}_m$. To formalize this result, we compare the principal angles between $\hat{\mathcal{U}}_m^{ARG}$ and $\mathcal{U}_m$ with those between $\hat{\mathcal{U}}_m$ and $\mathcal{U}_m$ in the asymptotic regime. Let $\theta_k(\mathcal{T}_1,\mathcal{T}_2)$ denote the $k$th principal angle between subspaces $\mathcal{T}_1$ and $\mathcal{T}_2$, and let $A = [a_1^{vec},\dots,a_r^{vec}]$ ($a_j^{vec} = (a_{j1},\dots,a_{jm})^{\top}$) be the $m \times r$ matrix, whose columns indicate the alignment between the reference directions $v_j$ and the true PC directions $u_i$.

\begin{thm} \label{thm:efficiency}
    Suppose Assumptions \ref{assum:spiked_covariance}-\ref{assum:reference_directions_ratios} hold. Then, as $p \rightarrow \infty$, the asymptotic relationship between $\theta_k(\hat{\mathcal{U}}_m^{ARG},\mathcal{U}_m)$ and $\theta_k(\hat{\mathcal{U}}_m,\mathcal{U}_m)$ depends on the properties of the matrix $A$.
    \begin{enumerate}[label=(\roman*)]
        \item If $A$ is a full-rank matrix, then $\mathbb{P}\big(\theta_{k}(\hat{\mathcal{U}}_m^{ARG}, \mathcal{U}_m) < \theta_{k}(\hat{\mathcal{U}}_m, \mathcal{U}_m)\big) \rightarrow 1$ for $k = 1,\dots,m$.
        \item If $A$ is neither a full-rank matrix nor the zero matrix, then $\mathbb{P}\big(\theta_{k}(\hat{\mathcal{U}}_m^{ARG}, \mathcal{U}_m) < \theta_{k}(\hat{\mathcal{U}}_m, \mathcal{U}_m) + \eps\big) \rightarrow 1$ for any $\eps>0$ and $k = 1,\dots,m$.
        \item If $A$ is the zero matrix, that is, the reference directions contain no information about $\mathcal{U}_m$, then $\theta_{k}(\hat{\mathcal{U}}_m^{ARG}, \mathcal{U}_m) - \theta_{k}(\hat{\mathcal{U}}_m, \mathcal{U}_m) \xrightarrow{P} 0$ for $k = 1,\dots,m$.
    \end{enumerate}
\end{thm}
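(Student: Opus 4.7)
The plan is to reduce each conclusion of the theorem to a comparison of singular values to which Lemma~\ref{lem:singular_value_asymptotics} applies. Using the standard identity $\cos\theta_k(\mathcal{T},\mathcal{U}_m) = \sigma_k(T^\top U_m)$ whenever $T$ has orthonormal columns spanning $\mathcal{T}$, it suffices to compare $\sigma_k(W_p^\top U_m)$ against $\sigma_k(\hat{U}_m^\top U_m)$, where $W_p$ is any $p \times m$ orthonormal basis of $\hat{\mathcal{U}}_m^{ARG}$. Setting $A_p = W_p^\top U_m$ and $B_p = \hat{U}_m^\top U_m$, both $m\times m$ with operator norm at most one, the crux is to identify the probability limit $X_c$ of $\|A_p c\| - \|B_p c\|$ for each $c \in S^{m-1}$ and classify its sign behavior in terms of $A$.

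The key algebraic identity exploits two orthogonal decompositions of the signal subspace $\mathcal{S}$: since $\tilde{v}_i = (I_p - \hat{U}_m\hat{U}_m^\top)v_i$, we have the orthogonal sum $\mathcal{S} = \hat{\mathcal{U}}_m \oplus \tilde{\mathcal{V}}_r$, and by the construction in \eqref{eq:ARGgen}, $\mathcal{S} = \hat{\mathcal{U}}_m^{ARG} \oplus \mathcal{D}_r$ is also orthogonal. Equating the two resulting decompositions of $\|P_{\mathcal{S}} u\|^2$ for the unit vector $u = U_m c \in \mathcal{U}_m$ yields
\begin{equation*}
    \|A_p c\|^2 - \|B_p c\|^2 \;=\; \|P_{\tilde{\mathcal{V}}_r} u\|^2 - \|P_{\mathcal{D}_r} u\|^2.
\end{equation*}
Theorem~\ref{thm:orthogonality} together with Lemma~\ref{lem:well_conditioned_subspaces} forces $\|P_{\mathcal{D}_r} u\|^2 \xrightarrow{P} 0$, while Lemmas~\ref{lem:asymp_properties_PC}, \ref{lem:asymp_properties_reference_directions}, and~\ref{lem:well_conditioned_subspaces} yield, after a direct computation,
\begin{equation*}
    \|P_{\tilde{\mathcal{V}}_r} u\|^2 \xrightarrow{P} c^\top M A (\tilde{V}^G)^{-1} A^\top M c, \qquad M := V(\Omega)(I_m - \Psi) V(\Omega)^\top,
\end{equation*}
where $V(\Omega) = [v_1(\Omega),\dots,v_m(\Omega)]$ and $\Psi$ is the diagonal matrix with entries $\phi_i(\Omega)/(\phi_i(\Omega)+\tau^2)$. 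Because $\tau^2 > 0$ makes $I_m - \Psi$ and hence $M$ almost surely positive definite, and $(\tilde{V}^G)^{-1}$ is positive definite by Lemma~\ref{lem:well_conditioned_subspaces}, the limiting quadratic form is positive semidefinite with kernel $M^{-1}\ker(A^\top)$, of dimension $m - \operatorname{rank}(A)$. The passage from squared norms to norms is painless: since $\|B_p c\|$ converges in probability to a strictly positive limit by Lemma~\ref{lem:asymp_properties_PC}(ii), the denominator $\|A_p c\| + \|B_p c\|$ stays bounded away from zero almost surely, so $X_c$ inherits the sign pattern of the quadratic form above.

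The three cases then split cleanly. In case (i), $\operatorname{rank}(A) = m$ makes $A^\top M$ injective, so $X_c > 0$ on all of $S^{m-1}$; by continuity and compactness, $\inf_{c \in S^{m-1}} X_c > 0$ almost surely, and Lemma~\ref{lem:singular_value_asymptotics}(i) delivers $\sigma_k(A_p) > \sigma_k(B_p)$ with probability tending to one. In case (iii), $A = 0$ forces $X_c \equiv 0$, and Lemma~\ref{lem:singular_value_asymptotics}(iii) gives the asymptotic equivalence of singular values. Translating from singular values to principal angles in both cases uses the local Lipschitzness of $\arccos$ on any compact subset of $(-1,1)$, together with the fact that $\cos\theta_k(\hat{\mathcal{U}}_m,\mathcal{U}_m)$ converges to a random value strictly inside $(0,1)$ almost surely by Lemma~\ref{lem:asymp_properties_PC}(ii).

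The main obstacle is case (ii), where $X_c \geq 0$ pointwise but vanishes on the subspace $M^{-1}\ker(A^\top)$ of positive dimension; Lemma~\ref{lem:singular_value_asymptotics}(ii) as stated requires $X_c > 0$ for every $c$, so it does not apply directly. My proposed remedy is a shrinkage perturbation of $B_p$: for any prescribed $\eps > 0$, set $B_p' = \sqrt{1-\delta}\,B_p$ with $\delta \in (0,\eps)$, so that $\|A_p c\| - \|B_p' c\|$ converges in probability to $X_c + (1-\sqrt{1-\delta})\,\ell_B(c)$, where $\ell_B(c) = \lim_p \|B_p c\|$ is continuous and strictly positive on $S^{m-1}$; hence the perturbed limit has almost surely positive infimum. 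Lemma~\ref{lem:singular_value_asymptotics}(i) applied to the pair $(A_p, B_p')$ then yields $\sigma_k(A_p) > \sigma_k(B_p')$ with probability tending to one, and since $\sigma_k(B_p') \geq \sigma_k(B_p) - \delta > \sigma_k(B_p) - \eps$, the required slack comparison $\mathbb{P}(\sigma_k(A_p) > \sigma_k(B_p) - \eps) \to 1$ follows. The remaining work---verifying the two subspace decomposition identities, carrying out the limit computation for $\|P_{\tilde{\mathcal{V}}_r} u\|^2$, and evaluating $\ell_B(c)$---is direct bookkeeping with the asymptotic formulas already recorded in Lemmas~\ref{lem:asymp_properties_PC} and~\ref{lem:asymp_properties_reference_directions}.
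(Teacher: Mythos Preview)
Your proposal is correct and follows essentially the same route as the paper: the same pair of orthogonal decompositions $\mathcal{S} = \hat{\mathcal{U}}_m \oplus \tilde{\mathcal{V}}_r = \hat{\mathcal{U}}_m^{ARG} \oplus \mathcal{D}_r$, the same reduction of $\|A_pc\|^2 - \|B_pc\|^2$ to $\|P_{\tilde{\mathcal{V}}_r}(U_mc)\|^2 - \|P_{\mathcal{D}_r}(U_mc)\|^2$, the same limiting quadratic form (your $c^\top M A (\tilde{V}^G)^{-1} A^\top M c$ is exactly the paper's $\|(\tilde{V}^G)^{-1/2}A^\top M c\|^2$), and the same appeal to Lemma~\ref{lem:singular_value_asymptotics} for the singular-value comparison. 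The one substantive difference is case~(ii): the paper invokes Lemma~\ref{lem:singular_value_asymptotics}(ii) directly, which requires $X_c > 0$ for every $c$; this holds because for each \emph{fixed} $c$ the random vector $Mc$ almost surely avoids the fixed proper subspace $\ker A^\top$ under the absolute-continuity assumption on the data, a point the paper leaves implicit. Your shrinkage perturbation $B_p' = \sqrt{1-\delta}\,B_p$ sidesteps that measure-theoretic claim entirely by manufacturing a uniformly positive infimum and appealing to part~(i) instead. Your route is a bit longer but more self-contained; the paper's is shorter once the almost-sure positivity is accepted.
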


\begin{proof} [Proof of Theorem \ref{thm:efficiency}]

    Denote an orthonormal basis of $\hat{\mathcal{U}}_m^{ARG}$ by $\hat{U}_m^{ARG}$. By the definition of the principal angles between subspaces, $\theta_k(\hat{\mathcal{U}}_m,\mathcal{U}_m) =\arccos {\sigma_k(\hat{U}_m^{\top}U_m)}$ and $\theta_k(\hat{\mathcal{U}}_m^{ARG},\mathcal{U}_m) = \arccos {\sigma_k((\hat{U}_m^{ARG})^{\top}U_m)}$. For any unit vector $c \in S^{m-1}$, consider two orthogonal decompositions of $P_{\mathcal{S}}(U_mc)$ as follows:
    \begin{equation*}
        \norm{P_{\mathcal{S}}(U_mc)}^2
        = \norm{P_{\hat{\mathcal{U}}^{ARG}_m}(U_mc)}^2 + \norm{P_{\mathcal{D}_r}(U_mc)}^2
        = \norm{P_{\hat{\mathcal{U}}_m}(U_mc)}^2 + \norm{P_{\tilde{\mathcal{V}}_r}(U_mc)}^2.
    \end{equation*}
    Since $D_r^{\top}U_mc \xrightarrow{P} 0_r$ by Theorem \ref{thm:orthogonality} and $(D_r^{\top}D_r)^{-1} \xrightarrow{P} (D^G)^{-1}$ by Lemma \ref{lem:well_conditioned_subspaces}, we obtain
    \begin{equation*}
        \norm{P_{\mathcal{D}_r}(U_mc)}^2 = c^{\top}U_m^{\top}D_r(D_r^{\top}D_r)^{-1}D_r^{\top}U_mc \xrightarrow{P} 0.
    \end{equation*}
    The above implies that the probability limit of $\norm{P_{\hat{\mathcal{U}}^{ARG}_m}(U_mc)}^2 - \norm{P_{\hat{\mathcal{U}}_m}(U_mc)}^2$ is equivalent to that of $\norm{P_{\tilde{\mathcal{V}}_r}(U_mc)}^2$. Using Lemmas \ref{lem:asymp_properties_PC} and \ref{lem:asymp_properties_reference_directions}, we obtain the probability limit of $\tilde{v}_i^{\top}U_mc$ as follows:
    \begin{align*}
        \tilde{v}_i^{\top}U_mc 
        &= v_i^{\top}U_mc - \sum_{j=1}^m (\hat{u}_j^{\top}v_i)(\hat{u}_j^{\top}U_mc) \\
        & \xrightarrow{P} c^{\top}a_i^{vec} - \sum_{j=1}^m \frac{\phi_j(\Omega)}{\phi_j(\Omega) + \tau^2} \left( \sum_{k=1}^m a_{ik}v_{jk}(\Omega)\right) \left(\sum_{l=1}^m c_lv_{jl}(\Omega)\right)\\
        & = (a_i^{vec})^{\top}V(\Omega) \diag\left(\frac{\tau^2}{\phi_j(\Omega) + \tau^2}\right)V(\Omega)^{\top}c.
    \end{align*}
    Therefore, applying Lemma \ref{lem:well_conditioned_subspaces}, the probability limit of $\norm{P_{\tilde{\mathcal{V}}_r}(U_mc)}^2$ becomes
    \begin{align} \label{eq:V_tilde_asymptotics}
        \norm{P_{\tilde{\mathcal{V}}_r}(U_mc)}^2 &= c^{\top}U_m^{\top}\tilde{V}_r(\tilde{V}_r^{\top}\tilde{V}_r)^{-1}\tilde{V}_r^{\top}U_mc \notag \\
        &\xrightarrow{P} \norm{(\tilde{V}^G)^{-\frac{1}{2}}A^{\top}V(\Omega)\diag\left(\frac{\tau^2}{\phi_j(\Omega) + \tau^2}\right)V(\Omega)^{\top}c}^2.
    \end{align}
    
    (i) If $A$ is a full-rank matrix, $(\tilde{V}^G)^{-\frac{1}{2}}A^{\top}V(\Omega)\diag\left(\frac{\tau^2}{\phi_j(\Omega) + \tau^2}\right)V(\Omega)^{\top}$ in \eqref{eq:V_tilde_asymptotics} is also full-rank, which implies:
    \begin{align*}
        &\inf_{c \in S^{m-1}}\norm{(\tilde{V}^G)^{-\frac{1}{2}}A^{\top}V(\Omega)\diag\left(\frac{\tau^2}{\phi_j(\Omega) + \tau^2}\right)V(\Omega)^{\top}c}^2\\
        = &\sigma_{min}\big((\tilde{V}^G)^{-\frac{1}{2}}A^{\top}V(\Omega)\diag\left(\frac{\tau^2}{\phi_j(\Omega) + \tau^2}\right)V(\Omega)^{\top}\big)^2>0.
    \end{align*}
    Since $\norm{(\hat{U}_m^{ARG})^{\top}U_m}_2 \leq \norm{\hat{U}_m^{ARG}}_2 \cdot \norm{U_m}_2 = 1$ and $\norm{(\hat{U}_m)^{\top}U_m}_2 \leq \norm{\hat{U}_m}_2 \cdot \norm{U_m}_2 = 1$, applying Lemma \ref{lem:singular_value_asymptotics}(i) and taking the arccosine, we conclude that as $p \rightarrow \infty$, $\mathbb{P}\big(\theta_{k}(\hat{\mathcal{U}}_m^{ARG}, \mathcal{U}_m) < \theta_{k}(\hat{\mathcal{U}}_m, \mathcal{U}_m)\big) \rightarrow 1$ for $k=1,\dots,m$.

    (ii) If $A$ is neither a full-rank matrix nor a zero matrix, the probability limit of \eqref{eq:V_tilde_asymptotics} is strictly positive almost surely, but its infimum over $S^{m-1}$ is zero due to rank deficiency. Applying Lemma \ref{lem:singular_value_asymptotics}(ii) and taking the arccosine, we conclude that as $p \rightarrow \infty$, $\mathbb{P}\big(\theta_{k}(\hat{\mathcal{U}}_m^{ARG}, \mathcal{U}_m) < \theta_{k}(\hat{\mathcal{U}}_m, \mathcal{U}_m) + \eps\big) \rightarrow 1$ for any $\eps>0$ and $k=1,\dots,m$.
    
    (iii) If $A$ is a zero matrix, then the probability limit of \eqref{eq:V_tilde_asymptotics} is zero. Applying Lemma \ref{lem:singular_value_asymptotics}(iii) and taking the arccosine, we conclude that as $p \rightarrow \infty$, $\theta_{k}(\hat{\mathcal{U}}_m^{ARG}, \mathcal{U}_m) - \theta_{k}(\hat{\mathcal{U}}_m, \mathcal{U}_m) \xrightarrow{P} 0$ for $k=1,\dots,m$.
\end{proof}

Although we have shown that the ARG estimator $\hat{\mathcal{U}}_m^{ARG}$ asymptotically outperforms the naive estimator $\hat{\mathcal{U}}_m$ in terms of principal angles, further analytical progress is limited by the intractability of their explicit characterization. While the probability limit of $(\hat{U}_m^{ARG})^{\top}U_m$ can be obtained, its singular values---governing the principal angles---lack a closed-form expression. Nevertheless, our results offer a rigorous asymptotic comparison framework, confirming the superiority of the ARG estimator in PC subspace estimation.

\section{Numerical studies}

\subsection{A simulation experiment}

We numerically demonstrate that the ARG estimator $\hat{\mathcal{U}}_m^{ARG}$ provides a more accurate estimate of the true PC subspace $\mathcal{U}_m$ than the naive sample PC subspace $\hat{\mathcal{U}}_m$ by presenting two toy examples: (i) a simple case of a single-spike, single-reference direction and (ii) a more general case of $m=2$ spikes and $r=2$ reference directions under Gaussian distributions. The following $p$-dimensional orthonormal vectors are used in defining the true PC and reference directions:

\begin{equation*}
    \begin{split}
    e_1 = \frac{1}{\sqrt{p}}\begin{pmatrix}
        1_{p/4}\\1_{p/4}\\1_{p/4}\\1_{p/4}
    \end{pmatrix},
    e_2 = \frac{1}{\sqrt{p}}\begin{pmatrix}
        1_{p/4}\\1_{p/4}\\-1_{p/4}\\-1_{p/4}
    \end{pmatrix},
    e_3 = \frac{1}{\sqrt{p}}\begin{pmatrix}
        1_{p/4}\\-1_{p/4}\\-1_{p/4}\\1_{p/4}
    \end{pmatrix},
    e_4 = \frac{1}{\sqrt{p}}\begin{pmatrix}
        1_{p/4}\\-1_{p/4}\\1_{p/4}\\-1_{p/4}
    \end{pmatrix}.
    \end{split}
\end{equation*}

First, we report the performance of the ARG estimator under a simple case of a single-spike, single-reference direction. The performance of the estimator is evaluated by comparing $\operatorname{Angle}(\hat{u}_1^{ARG},u_1)$ and $\operatorname{Angle}(\hat{u}_1,u_1)$. Data are generated from a multivariate Gaussian distribution with mean $\mu = 0_p$ and covariance $\Sigma = p e_1 e_1^{\top} + 40 I_p$, which has eigenvalues $(p+40,40,\dots,40)$ and the first eigenvector $e_1$. This setup satisfies Assumption \ref{assum:spiked_covariance} with $\sigma_1^2 = 1$ and $\tau^2 = 40$. For the reference direction, we set $v_1 = a_1 e_1 + \sqrt{1-a_1^2} e_2$ for several values of $a_1^2=\{0,1/4,1/2,3/4,1\}$ ($a_1>0$), ensuring that Assumption \ref{assum:reference_directions_ratios} holds. The sample size is fixed at $n=40$, while the dimension $p$ varies from 100 to 2000. For each combination of $(p,a_1^2)$, the simulation is repeated 100 times.

\begin{table}[t]
    \centering
    \begin{tabular}{c|cccccc}
         \diagbox{$p$}{$a_1^2$} & naive & 0 & 1/4 & 1/2 & 3/4 & 1 \\
         \hline
         \hline
         100  & \makecell{1.0170\\(0.1869)} & \makecell{1.0195\\(0.1867)} & \makecell{0.9649\\(0.1952)} & \makecell{0.8974\\(0.2097)} & \makecell{0.8103\\(0.2361)} & \makecell{0.6841\\(0.2917)} \\
         \hline
         200  & \makecell{0.9157\\(0.1392)} & \makecell{0.9174\\(0.1392)} & \makecell{0.8458\\(0.1359)} & \makecell{0.7548\\(0.1372)} & \makecell{0.6299\\(0.1510)} & \makecell{0.4104\\(0.2142)} \\
         \hline
         500  & \makecell{0.8481\\(0.0885)} & \makecell{0.8491\\(0.0887)} & \makecell{0.7678\\(0.0722)} & \makecell{0.6622\\(0.0581)} & \makecell{0.5096\\(0.0516)} & \makecell{0.1824\\(0.0979)} \\
         \hline
         1000 & \makecell{0.8213\\(0.0737)} & \makecell{0.8221\\(0.0738)} & \makecell{0.7420\\(0.0559)} & \makecell{0.6362\\(0.0384)} & \makecell{0.4793\\(0.0236)} & \makecell{0.0886\\(0.0536)} \\
         \hline
         2000 & \makecell{0.8104\\(0.0651)} & \makecell{0.8108\\(0.0652)} & \makecell{0.7322\\(0.0476)} & \makecell{0.6276\\(0.0306)} & \makecell{0.4708\\(0.0149)} & \makecell{0.0473\\(0.0324)} \\
    \end{tabular}
    
    \caption{Simulation results for the single-spike, single-reference case under a multivariate Gaussian distribution. Values outside parentheses represent the mean angles $\operatorname{Angle}(\hat{u}_1^{ARG}, u_1)$ and $\operatorname{Angle}(\hat{u}_1, u_1)$, and values inside parentheses are their standard deviations, all expressed in radians.}

\label{tab:spike1_simulation}
\end{table}

The simulation results for the single-spike, single-reference case are presented in Table \ref{tab:spike1_simulation}. The first column reports the results for the naive estimator $\hat{u}_1$, and the remaining columns show those for the ARG estimators at different values of $a_1^2$. Each row corresponds to a different dimension $p$.

A clear trend emerges: as $a_1^2$ increases, the ARG estimator $\hat{u}_1^{ARG}$ achieves smaller angles with the true PC direction, indicating improved performance. When $a_1^2=1$, where the reference direction is perfectly aligned with $u_1$, the angle appears to converge to zero as $p \rightarrow \infty$, which is consistent with Theorem \ref{thm:u1_asymptotic}. At the other extreme of $a_1^2=0$, for which the reference direction does not contain useful information, the ARG estimator exhibits a slight performance decrease compared to the naive estimator. While Theorem \ref{thm:u1_asymptotic} suggests that $\hat{u}_1^{ARG}$ and $\hat{u}_1$ should behave identically in the limit, the observed degradation suggests that incorporating prior information, when the reference direction is useless, can introduce a small inefficiency at finite $p$. However, this decrease is negligible, confirming that the ARG estimator maintains its effectiveness even when the reference direction is uninformative. More importantly, as long as the reference direction contains any nontrivial information, the ARG estimator consistently outperforms the naive estimator, demonstrating its practical advantage in PC subspace estimation for HDLSS data.

Next, we report the performance of the ARG estimator under a more general case of two-spike, two-reference direction. The performance of the estimator is evaluated by comparing the principal angles $\theta_k(\hat{\mathcal{U}}_m^{ARG}, \mathcal{U}_m)$ and $\theta_k(\hat{\mathcal{U}}_m, \mathcal{U}_m)$, for  $k =1,2$ and $m = 2$. Data are generated from a multivariate Gaussian distribution with mean $\mu = 0_p$ and covariance $\Sigma = 2p e_1 e_1^{\top} + pe_2e_2^{\top} + 40 I_p$, which has eigenvalues $(2p+40,p+40,40,\dots,40)$ and the first and second eigenvectors $e_1,e_2$. This setup satisfies Assumption \ref{assum:spiked_covariance} with $\sigma_1^2 = 2$, $\sigma_2^2=1$, and $\tau^2 = 40$. For reference directions, we fix $v_1 = e_1/2 + e_2/2 + e_3/2 + e_4/2$ and $v_2 = e_1/\sqrt{2} - e_3/\sqrt{2}$, ensuring that Assumption \ref{assum:reference_directions_ratios} holds with $a_{11} = a_{12} = 1/2$, $a_{21} = 1/\sqrt{2}$ and $a_{22} = 0$. The sample size is fixed at $n=40$, while the dimension $p$ varies from 100 to 2000. For each $p$, the simulation is repeated 100 times.

\begin{table}[t]
    \centering
    \begin{tabular}{c|cc|cc}
         $p$ & $\theta_1(\hat{\mathcal{U}}_m^{ARG},\mathcal{U}_m)$ & $\theta_1(\hat{\mathcal{U}}_m,\mathcal{U}_m)$ & $\theta_2(\hat{\mathcal{U}}_m^{ARG},\mathcal{U}_m)$ & $\theta_2(\hat{\mathcal{U}}_m,\mathcal{U}_m)$ \\
         \hline
         \hline
         100  & \makecell{0.4694\\(0.0995)} & \makecell{0.6873\\(0.1024)} & \makecell{1.0248\\(0.2208)} & \makecell{1.0805\\(0.2083)} \\
         \hline
         200  & \makecell{0.4067\\(0.0621)} & \makecell{0.6466\\(0.0715)} & \makecell{0.8754\\(0.1769)} & \makecell{0.9586\\(0.1724)} \\
         \hline
         500  & \makecell{0.3632\\(0.0219)} & \makecell{0.6348\\(0.0616)} & \makecell{0.7645\\(0.0641)} & \makecell{0.8514\\(0.0783)} \\
         \hline
         1000 & \makecell{0.3518\\(0.0131)} & \makecell{0.6276\\(0.0549)} & \makecell{0.7506\\(0.0668)} & \makecell{0.8374\\(0.0870)} \\
         \hline
         2000 & \makecell{0.3489\\(0.0098)} & \makecell{0.6202\\(0.0479)} & \makecell{0.7357\\(0.0521)} & \makecell{0.8201\\(0.0628)} \\
         \hline
    \end{tabular}

    \caption{Simulation results for the two-spike, two-reference case under a multivariate Gaussian distribution. Values outside parentheses represent the mean principal angles $\theta_k(\hat{\mathcal{U}}_m^{ARG}, \mathcal{U}_m)$ and $\theta_k(\hat{\mathcal{U}}_m, \mathcal{U}_m)$, and values inside parentheses are their standard deviations, all expressed in radians.}
    \label{tab:general_simulation}
\end{table}

The simulation results for the two-spike, two-reference direction case are presented in Table \ref{tab:general_simulation}. Each row corresponds to a different dimension $p$, while the first two columns report the first principal angles between the estimators and the true PC subspace $\mathcal{U}_m$, and the latter two report the second principal angles. The results consistently show a reduction in all principal angles when using the ARG estimator, demonstrating improved accuracy in subspace estimation.

The above experiments are replicated under a multivariate $t$-distribution, for which the $\rho$-mixing condition of Assumption \ref{assum:rho_mixing} is violated. The ARG estimator  remains to outperform in such situations even though there is no theoretical guarantee; see the \nameref{sec:supporting_information} for details.

\subsection{Real data analysis}

We demonstrate the application of the ARG estimator $\hat{\mathcal{U}}_m^{ARG}$ using daily log-return data from the NASDAQ stock market. Daily adjusted closing prices for all NASDAQ-listed stocks throughout 2024 are collected, from which log-returns are computed. To ensure data completeness, we retain only those stocks with no missing values throughout the entire year, resulting in $p = 3748$ variables. To reflect the HDLSS setting, we focus on the $n = 20$ trading days in December 2024, treating each day as an observation. Assuming the number of spikes $m = 2$, we compare (i) standard PCA and (ii) ARG-PCA, which takes advantage of the improved performance of the ARG estimator. A detailed description of the ARG-PCA algorithm is provided in the \nameref{sec:supporting_information}. As reference directions, we use the normalized one vector and the mean vector of daily log-returns over the full year 2024.

\begin{figure}
    \centering
    \includegraphics[width=1\linewidth]{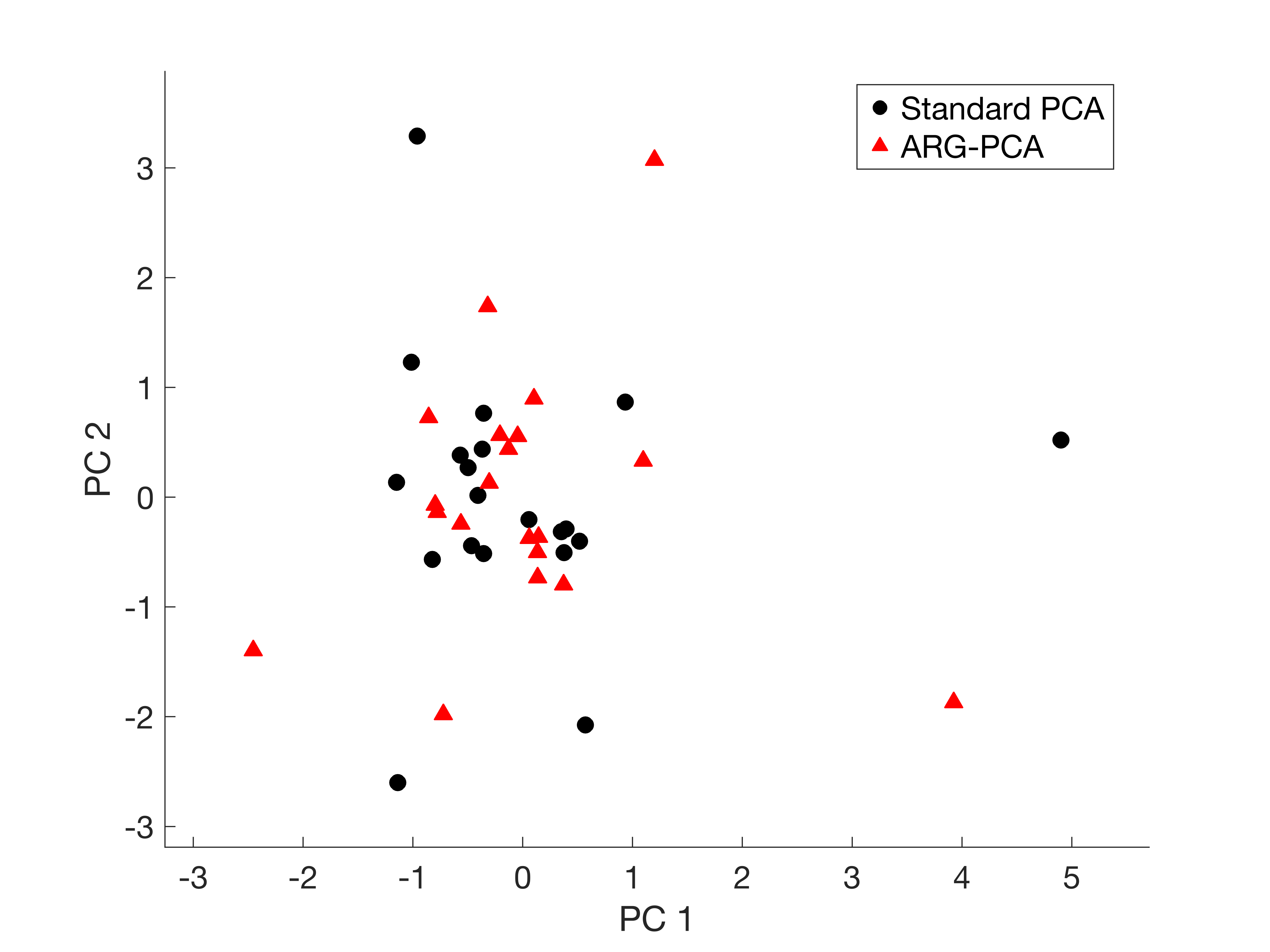}
    \caption{Score plot of the first two estimated PCs. Standard PCA scores are shown as black circles ($\bullet$), and ARG-PCA scores are shown as red triangles (\textcolor{red}{$\blacktriangle$}).}
    \label{fig:real_data}
\end{figure}

Figure~\ref{fig:real_data} presents the score plot based on the first two estimated PCs from both the standard PCA and the ARG-PCA. Although the true PC subspace $\mathcal{U}_m$ is unknown and direct evaluation is not possible, the two sets of PC scores show visible differences in orientation, with the ARG PC scores appearing as a rotated version of the standard PC scores. To quantify this difference, we compute the first and second principal angles between the ARG estimator $\hat{\mathcal{U}}_m^{ARG}$ and the sample PC subspace $\hat{\mathcal{U}}_m$:
\begin{equation*}
\theta_1(\hat{\mathcal{U}}_m^{ARG},\hat{\mathcal{U}}_m) = 0.1317, \quad \theta_2(\hat{\mathcal{U}}_m^{ARG},\hat{\mathcal{U}}_m) = 0.4544.
\end{equation*}
These nonzero angles confirm that $\hat{\mathcal{U}}_m^{ARG}$ differs meaningfully from $\hat{\mathcal{U}}_m$, potentially capturing a subspace that better reflects the underlying data structure.

\section{Discussion}
In high-dimensional covariance estimation, a common structural assumption is the low-rank plus sparse decomposition, as adopted in methods such as POET \citep{Fan2013} and more recent approaches based on nuclear norm plus $\ell_1$ norm penalization \citep{Chandrasekaran2011,Chandrasekaran2012,Farne2020,Farne2024}. While we focus on leveraging auxiliary information, these methods focus on exploiting structural assumptions on the covariance matrix. Thus, our approach may serve as a complementary tool for recovering the low-rank  structure in covariance.

\bibliographystyle{apalike}
\bibliography{library}

\begin{thebibliography}{}

\bibitem[Ahn et~al., 2007]{Ahn2007}
Ahn, J., Marron, J.~S., Muller, K.~M., and Chi, Y.-Y. (2007).
\newblock The high-dimension, low-sample-size geometric representation holds under mild conditions.
\newblock {\em Biometrika}, 94(3):760--766.

\bibitem[Bai and Ng, 2002]{Bai2002}
Bai, J. and Ng, S. (2002).
\newblock Determining the number of factors in approximate factor models.
\newblock {\em Econometrica}, 70(1):191--221.

\bibitem[Bartlett et~al., 2020]{Bartlett2020}
Bartlett, P.~L., Long, P.~M., Lugosi, G., and Tsigler, A. (2020).
\newblock Benign overfitting in linear regression.
\newblock {\em Proceedings of the National Academy of Sciences}, 117(48):30063--30070.

\bibitem[Bradley, 2005]{Bradley2005}
Bradley, R.~C. (2005).
\newblock Basic properties of strong mixing conditions. a survey and some open questions.
\newblock {\em Probability Surveys}, 2:107 -- 144.

\bibitem[Chamberlain and Rothschild, 1983]{Chamberlain1983}
Chamberlain, G. and Rothschild, M. (1983).
\newblock Arbitrage, factor structure, and mean-variance analysis on large asset markets.
\newblock {\em Econometrica}, 51(5):1281--1304.

\bibitem[Chandrasekaran et~al., 2012]{Chandrasekaran2012}
Chandrasekaran, V., Parrilo, P.~A., and Willsky, A.~S. (2012).
\newblock Latent variable graphical model selection via convex optimization.
\newblock {\em The Annals of Statistics}, 40(4):1935--1967.

\bibitem[Chandrasekaran et~al., 2011]{Chandrasekaran2011}
Chandrasekaran, V., Sanghavi, S., Parrilo, P.~A., and Willsky, A.~S. (2011).
\newblock Rank-sparsity incoherence for matrix decomposition.
\newblock {\em SIAM Journal on Optimization}, 21(2):572--596.

\bibitem[Chang et~al., 2021]{Chang2021}
Chang, W., Ahn, J., and Jung, S. (2021).
\newblock {Double data piling leads to perfect classification}.
\newblock {\em Electronic Journal of Statistics}, 15(2):6382 -- 6428.

\bibitem[Donoho et~al., 2018]{Donoho2018}
Donoho, D., Gavish, M., and Johnstone, I. (2018).
\newblock {Optimal shrinkage of eigenvalues in the spiked covariance model}.
\newblock {\em The Annals of Statistics}, 46(4):1742 -- 1778.

\bibitem[Fama and French, 1993]{Fama1993}
Fama, E.~F. and French, K.~R. (1993).
\newblock Common risk factors in the returns on stocks and bonds.
\newblock {\em Journal of Financial Economics}, 33(1):3--56.

\bibitem[Fama and French, 2015]{Fama2015}
Fama, E.~F. and French, K.~R. (2015).
\newblock A five-factor asset pricing model.
\newblock {\em Journal of Financial Economics}, 116(1):1--22.

\bibitem[Fan et~al., 2011]{Fan2011}
Fan, J., Liao, Y., and Mincheva, M. (2011).
\newblock {High-dimensional covariance matrix estimation in approximate factor models}.
\newblock {\em The Annals of Statistics}, 39(6):3320 -- 3356.

\bibitem[Fan et~al., 2013]{Fan2013}
Fan, J., Liao, Y., and Mincheva, M. (2013).
\newblock Large covariance estimation by thresholding principal orthogonal complements.
\newblock {\em Journal of the Royal Statistical Society. Series B (Statistical Methodology)}, 75(4):603--680.

\bibitem[Farnè and Montanari, 2020]{Farne2020}
Farnè, M. and Montanari, A. (2020).
\newblock A large covariance matrix estimator under intermediate spikiness regimes.
\newblock {\em Journal of Multivariate Analysis}, 176:104577.

\bibitem[Farnè and Montanari, 2024]{Farne2024}
Farnè, M. and Montanari, A. (2024).
\newblock Large factor model estimation by nuclear norm plus $\ell_1$ norm penalization.
\newblock {\em Journal of Multivariate Analysis}, 199:105244.

\bibitem[Goldberg and Kercheval, 2023]{Goldberg2023}
Goldberg, L.~R. and Kercheval, A.~N. (2023).
\newblock James–stein for the leading eigenvector.
\newblock {\em Proceedings of the National Academy of Sciences}, 120(2):e2207046120.

\bibitem[Goldberg et~al., 2022]{Goldberg2022}
Goldberg, L.~R., Papanicolaou, A., and Shkolnik, A. (2022).
\newblock The dispersion bias.
\newblock {\em SIAM Journal on Financial Mathematics}, 13(2):521--550.

\bibitem[Goldberg et~al., 2020]{Goldberg2020}
Goldberg, L.~R., Papanicolaou, A., Shkolnik, A., and Ulucam, S. (2020).
\newblock Better betas.
\newblock {\em The Journal of Portfolio Management}, 47(1):119 -- 136.

\bibitem[Gurdogan and Kercheval, 2022]{Gurdogan2022}
Gurdogan, H. and Kercheval, A. (2022).
\newblock Multiple anchor point shrinkage for the sample covariance matrix.
\newblock {\em SIAM Journal on Financial Mathematics}, 13(3):1112--1143.

\bibitem[Gurdogan and Shkolnik, 2024]{Gurdogan2024}
Gurdogan, H. and Shkolnik, A. (2024).
\newblock The quadratic optimization bias of large covariance matrices.
\newblock arXiv no. 2410.03053.

\bibitem[Hall et~al., 2005]{Hall2005}
Hall, P., Marron, J.~S., and Neeman, A. (2005).
\newblock Geometric representation of high dimension, low sample size data.
\newblock {\em Journal of the Royal Statistical Society. Series B (Statistical Methodology)}, 67(3):427--444.

\bibitem[Johnstone, 2001]{Johnstone2001}
Johnstone, I.~M. (2001).
\newblock On the distribution of the largest eigenvalue in principal components analysis.
\newblock {\em The Annals of Statistics}, 29(2):295--327.

\bibitem[Jung and Marron, 2009]{Jung2009}
Jung, S. and Marron, J.~S. (2009).
\newblock {PCA consistency in high dimension, low sample size context}.
\newblock {\em The Annals of Statistics}, 37(6B):4104 -- 4130.

\bibitem[Jung et~al., 2012]{Jung2012}
Jung, S., Sen, A., and Marron, J. (2012).
\newblock {Boundary behavior in High Dimension, Low Sample Size asymptotics of PCA}.
\newblock {\em Journal of Multivariate Analysis}, 109:190--203.

\bibitem[Kanehisa and Goto, 2000]{Kanehisa2000}
Kanehisa, M. and Goto, S. (2000).
\newblock {KEGG}: {Kyoto} {Encyclopedia} of {Genes} and {Genomes}.
\newblock {\em Nucleic Acids Research}, 28(1):27--30.

\bibitem[Ke et~al., 2023]{Ke2023}
Ke, Z.~T., Ma, Y., and Lin, X. (2023).
\newblock Estimation of the number of spiked eigenvalues in a covariance matrix by bulk eigenvalue matching analysis.
\newblock {\em Journal of the American Statistical Association}, 118(541):374--392.

\bibitem[Kim et~al., 2024]{Kim2024}
Kim, T., Chang, W., Ahn, J., and Jung, S. (2024).
\newblock Double data piling: a high-dimensional solution for asymptotically perfect multi-category classification.
\newblock {\em Journal of the Korean Statistical Society}, 53:704--737.

\bibitem[Kobak et~al., 2020]{Kobak2020}
Kobak, D., Lomond, J., and Sanchez, B. (2020).
\newblock The optimal ridge penalty for real-world high-dimensional data can be zero or negative due to the implicit ridge regularization.
\newblock {\em Journal of Machine Learning Research}, 21(1).

\bibitem[Kolmogorov and Rozanov, 1960]{Kolmogorov1960}
Kolmogorov, A.~N. and Rozanov, Y.~A. (1960).
\newblock On strong mixing conditions for stationary gaussian processes.
\newblock {\em Theory of Probability \& Its Applications}, 5(2):204--208.

\bibitem[Lintner, 1965]{Lintner1965}
Lintner, J. (1965).
\newblock The valuation of risk assets and the selection of risky investments in stock portfolios and capital budgets.
\newblock {\em The Review of Economics and Statistics}, 47(1):13--37.

\bibitem[Sharpe, 1964]{Sharpe1964}
Sharpe, W.~F. (1964).
\newblock Capital asset prices: A theory of market equilibrium under conditions of risk.
\newblock {\em The Journal of Finance}, 19(3):425--442.

\bibitem[Shkolnik, 2022]{Shkolnik2022}
Shkolnik, A. (2022).
\newblock {James–Stein estimation of the first principal component}.
\newblock {\em Stat}, 11(1):e419.

\bibitem[Shkolnik et~al., 2025]{Shkolnik2025}
Shkolnik, A., Kercheval, A., Gurdogan, H., Goldberg, L.~R., and Bar, H. (2025).
\newblock Portfolio selection revisited.
\newblock {\em Annals of Operations Research}, 346(1):137--155.

\bibitem[Yata and Aoshima, 2012]{Yata2012}
Yata, K. and Aoshima, M. (2012).
\newblock Effective {PCA} for high-dimension, low-sample-size data with noise reduction via geometric representations.
\newblock {\em Journal of Multivariate Analysis}, 105(1):193--215.

\end{thebibliography}

\clearpage
\phantomsection
\section*{Supporting Information} \label{sec:supporting_information}

The \nameref{sec:supporting_information} includes omitted proofs, additional simulation results for the multivariate $t$-distribution, a detailed description of the ARG-PCA algorithm, and the MATLAB code used in this study.

\subsection*{Appendix S1: Proof of Lemma 6}

\begin{proof} [Proof of Lemma \ref{lem:singular_value_asymptotics}]

    (i) Consider the following min-max theorem for singular values: the $k$th largest singular value of $m \times m$ matrix $M$ can be represented as
    \begin{equation*}
        \sigma_k (M) = \min_{\dim{\mathcal{T}} = m-k+1} \max_{c \in \mathcal{T} \cap S^{m-1}} \norm{Mc}.
    \end{equation*}
    Denote $f_M(\mathcal{T}) = \max_{c \in \mathcal{T} \cap S^{m-1}} \norm{Mc}$, $\mathcal{T}_{M,k} = \argmin_{\dim{\mathcal{T}}=m-k+1}f_M(\mathcal{T})$, and $c_{M,\mathcal{T}} = \argmax_{c \in \mathcal{T} \cap S^{m-1}}\norm{Mc}$. Then, we can lower bound $\sigma_k(A_p) - \sigma_k(B_p)$ as follows:

    \begin{align*}
        \sigma_k(A_p) - \sigma_k(B_p) & = f_{A_p}(\mathcal{T}_{A_p,k}) - f_{B_p}(\mathcal{T}_{B_p,k}) \notag \\
        & \geq f_{A_p}(\mathcal{T}_{A_p,k}) - f_{B_p}(\mathcal{T}_{A_p,k}) \notag \\
        & = \norm{A_pc_{A_p,\mathcal{T}_{A_p,k}}} - \norm{B_pc_{B_p,\mathcal{T}_{A_p,k}}} \notag \\
        & \geq \norm{A_pc_{B_p,\mathcal{T}_{A_p,k}}} - \norm{B_pc_{B_p,\mathcal{T}_{A_p,k}}} \notag \\
        & \geq \inf_{c \in S^{m-1}} (\norm{A_pc} - \norm{B_pc}).
    \end{align*}

    Fix $\eps>0$ and $c \in S^{m-1}$. Denote an $\eps$-cover of $S^{m-1}$ as $\{c_1,\dots,c_{N(\eps)}\}$. Then, there exists $i^* \in \{1,\dots,N(\eps)\}$ such that $\norm{c-c_{i^*}}<\eps$. Since $\norm{A_p}_2 \leq 1$ and $\norm{B_p}_2 \leq 1$ for all $p$, we get:
    
    \begin{align} \label{eq:norm_diff_positive_decomposition}
        \norm{A_pc} - \norm{B_pc} &= (\norm{A_pc} - \norm{A_pc_{i^*}}) + (\norm{A_pc_{i^*}} - \norm{B_pc_{i^*}}) + (\norm{B_pc_{i^*}} - \norm{B_pc}) \notag \\
        & \geq -\norm{A_p(c-c_{i^*})} + \min_{1 \leq i \leq N(\eps)} (\norm{A_pc_i} - \norm{B_pc_i}) - \norm{B_p(c-c_{i^*})} \notag \\
        & \geq -\norm{A_p}_2 \norm{c-c_{i^*}} + \min_{1 \leq i \leq N(\eps)} (\norm{A_pc_i} - \norm{B_pc_i}) - \norm{B_p}_2\norm{c-c_{i^*}} \notag \\
        & \geq \min_{1 \leq i \leq N(\eps)} (\norm{A_pc_i} - \norm{B_pc_i}) - 2\eps.
    \end{align}
    As $\norm{A_pc_i} - \norm{B_pc_i} \xrightarrow{P}X_{c_i}$ and $X_{c_i}>0$ almost surely, $\mathbb{P}(\norm{A_pc_i} - \norm{B_pc_i} > \frac{1}{2}X_{c_i}) \rightarrow 1$ as $p \rightarrow \infty$ for $i=1,\dots,N(\eps)$. Define the event $E_p = \cap_{i=1}^{N(\eps)}\{\norm{A_pc_i} - \norm{B_pc_i}>\frac{1}{2}X_{c_i}\}$. Since $N(\eps)$ is independent of $p$, $\sum_{i=1}^{N(\eps)} \mathbb{P}(\norm{A_pc_i} - \norm{B_pc_i} \leq \frac{1}{2}X_{c_i}) \rightarrow 0$ so that $\mathbb{P}(E_p) \rightarrow 1$ as $p \rightarrow \infty$. On the event $E_p$,
    
    \begin{equation} \label{eq:event_Ep}
        \min_{1 \leq i \leq N(\eps)}(\norm{A_pc_i} - \norm{B_pc_i}) > \frac{1}{2} \min_{1 \leq i \leq N(\eps)}X_{c_i} \geq \frac{1}{2} \inf_{c \in S^{m-1}}X_c
    \end{equation}
    holds. Combining (\ref{eq:norm_diff_positive_decomposition}) and (\ref{eq:event_Ep}), we obtain:

    \begin{align} \label{eq:norm_diff_positive}
        & \mathbb{P}(\inf_{c \in S^{m-1}}(\norm{A_pc} - \norm{B_pc}) \leq 0) \notag \\
        = & \mathbb{P}(\{\inf_{c \in S^{m-1}}(\norm{A_pc} - \norm{B_pc}) \leq 0\} \cap E_p) + \mathbb{P}(\{\inf_{c \in S^{m-1}}(\norm{A_pc} - \norm{B_pc}) \leq 0\} \cap E_p^c) \notag \\
        \leq & \mathbb{P}(\frac{1}{2}\inf_{c \in S^{m-1}}X_c \leq 2\eps) + \mathbb{P}(E_p^c).
    \end{align}
    Since $\inf_{c \in S^{m-1}}X_c > 0$ holds almost surely, taking $\eps \rightarrow 0$, the first term of the upper bound of (\ref{eq:norm_diff_positive}) converges to zero; the second term converges to zero as $p \rightarrow \infty$. Therefore, we conclude that as $p \rightarrow \infty$, for $k=1,\dots,m$,
    
    \begin{equation*}
        \mathbb{P}(\sigma_k(A_p) > \sigma_k(B_p)) \geq \mathbb{P}(\inf_{c \in S^{m-1}}(\norm{A_pc} - \norm{B_pc})>0) \rightarrow 1.
    \end{equation*}

    (ii) Repeating the same argument as (i) with $(\eps/3)$-cover of $S^{m-1}$, we obtain:

    \begin{equation} \label{eq:norm_diff_non_negative}
        \mathbb{P}(\inf_{c \in S^{m-1}}(\norm{A_pc} - \norm{B_pc}) \leq- \eps) \leq \mathbb{P}(\frac{1}{2}\inf_{c \in S^{m-1}}X_c \leq -\frac{1}{3}\eps) + \mathbb{P}(E_p^c).
    \end{equation}
    Since $\inf_{c \in S^{m-1}}X_c=0$, the first term of the upper bound of (\ref{eq:norm_diff_non_negative}) equals zero; the second term converges to zero as $p \rightarrow \infty$. Therefore, we conclude that as $p \rightarrow \infty$, for any $\eps>0$ and $k=1,\dots,m$,

    \begin{equation*}
        \mathbb{P}(\sigma_k(A_p) > \sigma_k(B_p) - \eps) \geq \mathbb{P}(\inf_{c \in S^{m-1}}(\norm{A_pc} - \norm{B_pc})>-\eps) \rightarrow 1.
    \end{equation*}

    (iii) We first prove that the convergence in probability holds uniformly over $c \in S^{m-1}$, that is,

    \begin{equation} \label{eq:unif_convergence}
        \sup_{c \in S^{m-1}} | \norm{A_pc} - \norm{B_pc}| \xrightarrow{P} 0.
    \end{equation}
    Fix $\eps>0$ and $c \in S^{m-1}$. Denote an $(\eps/3)$-cover of $S^{m-1}$ as $\{c_1,...,c_{N(\eps/3)}\}$. Then, $\exists i^* \in \{1,\dots,N(\eps/3)\}$ such that $\norm{c-c_{i^*}} < \eps/3$. Since $\norm{A_p}_2 \leq 1$ and $\norm{B_p}_2 \leq 1$ for all $p$, we get:

    \begin{align*}
        \abs{\norm{A_pc} - \norm{B_pc}} &\leq \abs{\norm{A_pc} - \norm{A_pc_{i^*}}} + \abs{\norm{A_pc_{i^*}} - \norm{B_pc_{i^*}}} + \abs{\norm{B_pc_{i^*}} - \norm{B_pc}} \notag \\
        & \leq \norm{A_p(c-c_{i^*})} + \max_{1 \leq i \leq N(\eps/3)} \abs{\norm{A_pc_i} - \norm{B_pc_i}} + \norm{B_p(c-c_{i^*})} \notag \\
        & \leq \norm{A_p}_2 \norm{c-c_{i^*}} + \max_{1 \leq i \leq N(\eps/3)} \abs{\norm{A_pc_i} - \norm{B_pc_i}} + \norm{B_p}_2\norm{c-c_{i^*}} \notag \\
        & \leq \max_{1 \leq i \leq N(\eps/3)} \abs{\norm{A_pc_i} - \norm{B_pc_i}} + \frac{2\eps}{3}.
    \end{align*}
    As maximum of finite number of random variables converging to zero in probability also converges to zero in probability, we obtain:

    \begin{equation*}
        \mathbb{P}(\sup_{c \in S^{m-1}}\abs{\norm{A_pc} - \norm{B_pc}} > \eps) \leq \mathbb{P}(\max_{1 \leq i \leq N(\eps/3)}\abs{\norm{A_pc_i} - \norm{B_pc_i}} > \frac{\eps}{3}) \rightarrow 0,
    \end{equation*}
    and therefore the uniform convergence (\ref{eq:unif_convergence}) holds.
    
    For arbitrary subspace $\mathcal{T} \subseteq \Real^m$, we have:

    \begin{align*}
        \abs{f_{A_p}(\mathcal{T}) - f_{B_p}(\mathcal{T})} & = (f_{A_p}(\mathcal{T}) - f_{B_p}(\mathcal{T})) \lor (f_{B_p}(\mathcal{T}) - f_{A_p}(\mathcal{T})) \notag \\
        & = (\norm{A_pc_{A_p,\mathcal{T}}} - \norm{B_pc_{B_p,\mathcal{T}}}) \lor (\norm{B_pc_{B_p,\mathcal{T}}} - \norm{A_pc_{A_p,\mathcal{T}}}) \notag \\
        & \leq (\norm{A_pc_{A_p,\mathcal{T}}} - \norm{B_pc_{A_p,\mathcal{T}}}) \lor (\norm{B_pc_{B_p,\mathcal{T}}} - \norm{A_pc_{B_p,\mathcal{T}}}) \notag \\
        & \leq \sup_{c \in S^{m-1}} \abs{\norm{A_pc} - \norm{B_pc}}.
    \end{align*}
    Next, consider the singular values $\sigma_k(A_p)$ and $\sigma_k(B_p)$. By definition,

    \begin{align*}
        \abs{\sigma_k(A_p) - \sigma_k(B_p)} & = \abs{f_{A_p}(\mathcal{T}_{A_p,k}) - f_{B_p}(\mathcal{T}_{B_p,k})} \notag \\
        & = (f_{A_p}(\mathcal{T}_{A_p,k}) - f_{B_p}(\mathcal{T}_{B_p,k})) \lor (f_{B_p}(\mathcal{T}_{B_p,k}) - f_{A_p}(\mathcal{T}_{A_p,k})) \notag \\
        & \leq (f_{A_p}(\mathcal{T}_{B_p,k}) - f_{B_p}(\mathcal{T}_{B_p,k})) \lor (f_{B_p}(\mathcal{T}_{A_p,k}) - f_{A_p}(\mathcal{T}_{A_p,k})) \notag \\
        & \leq \sup_{\mathcal{T}\subseteq \Real^m} \abs{f_{A_p}(\mathcal{T})-f_{B_p}(\mathcal{T})} \notag \\
        & \leq \sup_{c \in S^{m-1}} \abs{\norm{A_pc} - \norm{B_pc}}.
    \end{align*}
    This bound demonstrates that the difference between singular values is upper bounded by the supremum of difference between norms $\norm{A_pc}$ and $\norm{B_pc}$, which converges to zero in probability. Therefore, we conclude that as $p \rightarrow \infty$, for $k=1,\dots,m$,

    \begin{equation*}
        \sigma_k(A_p) - \sigma_k(B_p) \xrightarrow{P} 0.
    \end{equation*}
\end{proof}

\subsection*{Appendix S2: Simulation results under multivariate $t$-distribution}

In this section, we report the performance of the ARG estimator when the underlying data are drawn from a multivariate $t$-distribution with 5 degrees of freedom. This distribution violates Assumption~\ref{assum:rho_mixing}, as it does not satisfy the $\rho$-mixing condition required for the theoretical guarantees established in the main text. Aside from the distributional assumption, all other simulation conditions remain the same.

\begin{table}[t]
    \centering
    \begin{tabular}{c|cccccc}
        \diagbox{$p$}{$a_1^2$} & naive & 0 & 1/4 & 1/2 & 3/4 & 1 \\
        \hline
        \hline
        100  & \makecell{1.3405\\(0.1622)} & \makecell{1.3410\\(0.1618)} & \makecell{1.3274\\(0.1718)} & \makecell{1.3135\\(0.1839)} & \makecell{1.2988\\(0.1982)} & \makecell{1.2834\\(0.2147)} \\
        \hline
        200  & \makecell{1.3608\\(0.1395)} & \makecell{1.3610\\(0.1393)} & \makecell{1.3478\\(0.1494)} & \makecell{1.3333\\(0.1608)} & \makecell{1.3177\\(0.1741)} & \makecell{1.3000\\(0.1900)} \\
        \hline
        500  & \makecell{1.3778\\(0.1582)} & \makecell{1.3779\\(0.1581)} & \makecell{1.3651\\(0.1695)} & \makecell{1.3510\\(0.1834)} & \makecell{1.3356\\(0.2010)} & \makecell{1.3180\\(0.2241)} \\
        \hline
        1000 & \makecell{1.3516\\(0.1657)} & \makecell{1.3517\\(0.1657)} & \makecell{1.3354\\(0.1795)} & \makecell{1.3180\\(0.1959)} & \makecell{1.2988\\(0.2155)} & \makecell{1.2777\\(0.2391)} \\
        \hline
        2000 & \makecell{1.3597\\(0.1484)} & \makecell{1.3597\\(0.1484)} & \makecell{1.3457\\(0.1594)} & \makecell{1.3304\\(0.1721)} & \makecell{1.3138\\(0.1872)} & \makecell{1.2953\\(0.2056)} \\
    \end{tabular}
    \caption{Simulation results for the single-spike, single-reference direction case under a multivariate $t$-distribution. Values outside parentheses represent the mean angles $\operatorname{Angle}(\hat{u}_1^{ARG}, u_1)$ and $\operatorname{Angle}(\hat{u}_1, u_1)$, and values inside parentheses are their standard deviations, all expressed in radians.}
    \label{tab:spike1_simulation_t}
\end{table}

In the single-spike, single-reference direction setting, the performance of the ARG estimator exhibits minimal improvement as the information quantity increases (i.e., as $a_1^2$ increases). This is in sharp contrast to the Gaussian case, where increase in information quantity leads to substantial performance gains. The lack of $\rho$-mixing appears to hinder the estimator’s ability to effectively leverage the directional information provided by the reference direction. The results are summarized in Table \ref{tab:spike1_simulation_t}.

\begin{table}[t]
    \centering
    \begin{tabular}{c|cc|cc}
        $p$ & $\theta_1(\hat{\mathcal{U}}_m^{ARG},\mathcal{U}_m)$ & $\theta_1(\hat{\mathcal{U}}_m,\mathcal{U}_m)$ & $\theta_2(\hat{\mathcal{U}}_m^{ARG},\mathcal{U}_m)$ & $\theta_2(\hat{\mathcal{U}}_m,\mathcal{U}_m)$ \\
        \hline
        \hline
        100  & \makecell{0.8554\\(0.2137)} & \makecell{0.9493\\(0.1755)} & \makecell{1.3519\\(0.1658)} & \makecell{1.3644\\(0.1533)} \\
        \hline
        200  & \makecell{0.8092\\(0.2168)} & \makecell{0.9275\\(0.1718)} & \makecell{1.3440\\(0.1773)} & \makecell{1.3613\\(0.1594)} \\
        \hline
        500  & \makecell{0.8428\\(0.2338)} & \makecell{0.9519\\(0.1815)} & \makecell{1.3434\\(0.1740)} & \makecell{1.3643\\(0.1557)} \\
        \hline
        1000 & \makecell{0.8187\\(0.2277)} & \makecell{0.9415\\(0.1779)} & \makecell{1.3513\\(0.1636)} & \makecell{1.3711\\(0.1486)} \\
        \hline
        2000 & \makecell{0.7945\\(0.2209)} & \makecell{0.9270\\(0.1628)} & \makecell{1.3491\\(0.1496)} & \makecell{1.3678\\(0.1380)} \\
        \hline
    \end{tabular}
    \caption{Simulation results for the two-spike, two-reference direction case under a multivariate $t$-distribution. Values outside parentheses represent the mean principal angles $\theta_k(\hat{\mathcal{U}}_m^{ARG}, \mathcal{U}_m)$ and $\theta_k(\hat{\mathcal{U}}_m, \mathcal{U}_m)$, and values inside parentheses are their standard deviations, all expressed in radians.}
    \label{tab:general_simulation_t}
\end{table}

In the multi-spike, multi-reference direction setting, we observe modest improvement in the first principal angle, suggesting that the ARG estimator retains some benefit from prior information even in the absence of $\rho$-mixing. However, the second principal angle remains largely unaffected, mirroring the behavior observed in the single-spike case. The results are summarized in Table \ref{tab:general_simulation_t}.

Nevertheless, in all settings considered, the ARG estimator consistently outperforms or matches the naive estimator. These results imply that, in practice, the ARG estimator remains a reasonable and safe choice when prior information about the true PC subspace is available, even under deviations from ideal theoretical conditions such as the $\rho$-mixing assumption.

\subsection*{Appendix S3: ARG-PCA}

We present a new variant of PCA, named \textit{ARG-PCA}, which utilizes the improved asymptotic accuracy of the ARG estimator $\hat{\mathcal{U}}_m^{ARG}$. The detailed procedure for ARG-PCA is presented in Algorithm \ref{alg:ARG-PCA}. We assume that the data were centered in advance.

\begin{algorithm}
\caption{ARG-PCA Algorithm}
\label{alg:ARG-PCA}
\begin{algorithmic}

    \State \textbf{Input:} The centered data matrix $ X_{cen}\in \Real^{p \times n}$, the matrix of reference directions $V_r \in \Real^{p \times r}$, and the number of spikes $m$.
    \State \textbf{Output:} ARG PC variances $\hat{\lambda}_1^{ARG},\dots,\hat{\lambda}_m^{ARG}$, directions $\hat{u}_1^{ARG},\dots,\hat{u}_m^{ARG}$, and scores $\hat{w}_1^{ARG},\dots,\hat{w}_n^{ARG}$.\\
    \State \textbf{Step 1.} Perform PCA on the centered data $X_{cen}$ and compute a non-orthonormal basis of $\hat{\mathcal{U}}_m^{ARG}$ using:
    \[(S_m - \tilde{\lambda}I_p)(I_p - P_{\mathcal{V}_r})\hat{U}_m.\]
    \State \textbf{Step 2.} Project $X_{cen}$ onto $\hat{\mathcal{U}}_m^{ARG}$ using the basis obtained in Step 1.
    \State \textbf{Step 3.} Perform PCA on the projected data to obtain ARG PC variances, directions and scores.

\end{algorithmic}
\end{algorithm}

\end{document}